\let\oldnl\nl
\newcommand{\nonl}{\renewcommand{\nl}{\let\nl\oldnl}}
\let\doendproof\endproof
\renewcommand\endproof{~\hfill$\qed$\doendproof}
\ttfamily\color{black!80},           
\tiny\color{gray},  
\newif\ifpaper
\begin{document}
	\title{$k$-local Graphs}
	%
	%
	\author{Christian Beth\orcidID{0000-0003-3313-0752} \and
		Pamela Fleischmann\orcidID{0000-0002-1531-7970} \and
		Annika Huch\orcidID{0009-0005-1145-5806} \and
		Daniyal Kazempour\orcidID{0000-0002-2063-2756} \and
		Peer Kröger\orcidID{0000-0001-5646-3299}\and
		Andrea Kulow\and
		Matthias Renz\orcidID{0000-0002-2024-7700}
		}
	\authorrunning{C. Beth et al.}
	%
	\institute{Department of Computer Science, Kiel University, Germany
		\email{\{cbe,fpa,ahu,dka,pkr,mr\}@informatik.uni-kiel.de, stu229089@mail.uni-kiel.de}}
	\maketitle              

In 2017 Day et al. introduced the notion of locality as a structural complexity-measure for patterns in the field
of pattern matching established by Angluin in 1980.
In 2019 Casel et al. showed that determining the locality of an arbitrary pattern is NP-complete.
Inspired by hierarchical clustering, we extend the notion to coloured graphs, i.e., given a coloured graph determine an enumeration of the colours such that colouring the graph stepwise according to the enumeration leads to as few clusters as possible.
Besides initial theoretical results on graph classes, we propose a priority search algorithm to compute the $k$-locality of a graph.
This algorithm guarantees a correct result, while being faster by orders of magnitude than an exhaustive search over all permutations of colour enumerations.
Finally, we perform a case study on a DBLP subgraph to demonstrate the potential of $k$-locality for knowledge discovery.

	\section{Introduction}
	The notion of locality was firstly introduced by Day et al. as a structural complexity-measure for patterns
\cite{localpatterns}. A pattern is a word that consists of constants from an alphabet of terminal symbols and
variables. In their work, the authors investigated the \emph{matching problem} where one decides whether, given a
word $w$ and a pattern $\alpha$, the word may be obtained from substituting the variables in $\alpha$. While this
problem is $\NP$-complete in general \cite{DBLP:journals/jcss/Angluin80}, they showed that the matching and also 
the membership problem can be solved efficiently for the class of $k$-local patterns for any fixed $k$.
Extending the idea of the matching algorithm for \emph{non-cross patterns} \cite{shinohara1982polynomial}, 
for $k$-local patterns one has at every given point during the substitution at most $k$ separate factors within the pattern, i.e., the algorithm in \cite{localpatterns} tracks the possible extensions of such factors via dynamic programming. In \cite{cutwidth} the research on $k$-locality is continued from a combinatorial perspective. 
The authors investigated the problem of computing the locality number of words and showed via a reduction from the decision problem $\mathsf{cutwidth}$ on graphs (asking to decide whether the cutwidth
of a graph is upper bounded by a given number) that the computation of the locality number is $\NP$-complete.
The studies of $k$-local words from a word combinatorial perspective continued in \cite{fleischmann}. 
A first connection between $k$-locality and graphs was introduced in \cite{SofsemTim}, where the authors characterised the graphs which are represented by $1$-local words.


Hierarchical clustering is an archetype of clustering methods that allow for partitioning a given dataset.
It operates on distances between objects (instances) and depending on the used linkage distance (e.g., single or complete link) and the clustering strategy like agglomerative (bottom-up) or divisive (top-down) a hierarchy of partitions among all objects is computed.
Unlike other archetypes like, e.g., density-based or variance-minimising approaches, hierarchical clustering allows one to see and understand either single objects or clusters within their local context (e.g., a cluster A and B are most similar to each other within a cluster C).
To combine hierarchical clustering \cite{sibson1973slink} and the notion of locality on graphs, we investigate coloured graphs and enumerations of the used colours.
In the first step, all vertices of the first colour in the enumeration form a singleton cluster each.
In an arbitrary step, the vertices of the actual colour are either extending an existing cluster (if there is an edge from the vertex to a vertex in the cluster) or they form a new singleton cluster (otherwise).
See Figure~\ref{klocalexample} for an example.

\begin{figure}
    \centering
    \begin{tikzpicture}[scale=0.55]
        \node[shape=circle,draw=black,fill=yellow] (A) at (0,0) {};
        \node[shape=circle,draw=black,fill=red] (B) at (0,2) {};
        \node[shape=circle,draw=black,fill=blue] (C) at (2,2) {};
        \node[shape=circle,draw=black,fill=cyan] (D) at (1,1) {};
        \node[shape=circle,draw=black,fill=blue] (E) at (2,0) {};
        \node[shape=circle,draw=black,fill=yellow] (F) at (3,1) {} ;
        \node[shape=circle,draw=black,fill=red] (G) at (4,0) {} ;
        \node[shape=circle,draw=black,fill=red] (H) at (4,2) {} ;
        
        \path [-] (A) edge node[left] {} (B);
        \path [-](B) edge node[left] {} (C);
        \path [-](B) edge node[left] {} (D);
        \path [-](C) edge node[left] {} (H);
        \path [-](C) edge node[left] {} (F);
        \path [-](D) edge node[left] {} (C);
        \path [-](D) edge node[left] {} (F);
        \path [-](D) edge node[left] {} (E);
        \path [-](A) edge node[right] {} (E);
        \path [-](E) edge node[right] {} (F);   
        \path [-](E) edge node[right] {} (G);  
        \path [-](F) edge node[right] {} (F);   
        
        \draw (A) circle (0.3);
        \draw (F) circle (0.3);
        \draw (G) circle (0.3);
        \draw (H) circle (0.3);
        
        \draw (0,1) ellipse (0.5cm and 1.5cm);
        
        \draw (0.3,1) ellipse (1cm and 1.5cm);
        
        \draw (2,1) ellipse (3cm and 2cm);
    \end{tikzpicture}
    \hspace{0.5cm}
    \begin{tikzpicture}[scale=0.55]
        \node[shape=circle,draw=black,fill=yellow] (A) at (0,0) {};
        \node[shape=circle,draw=black,fill=red] (B) at (0,2) {};
        \node[shape=circle,draw=black,fill=blue] (C) at (2,2) {};
        \node[shape=circle,draw=black,fill=cyan] (D) at (1,1) {};
        \node[shape=circle,draw=black,fill=blue] (E) at (2,0) {};
        \node[shape=circle,draw=black,fill=yellow] (F) at (3,1) {} ;
        \node[shape=circle,draw=black,fill=red] (G) at (4,0) {} ;
        \node[shape=circle,draw=black,fill=red] (H) at (4,2) {} ;
        
        \path [-] (A) edge node[left] {} (B);
        \path [-](B) edge node[left] {} (C);
        \path [-](B) edge node[left] {} (D);
        \path [-](C) edge node[left] {} (H);
        \path [-](C) edge node[left] {} (F);
        \path [-](D) edge node[left] {} (C);
        \path [-](D) edge node[left] {} (F);
        \path [-](D) edge node[left] {} (E);
        \path [-](A) edge node[right] {} (E);
        \path [-](E) edge node[right] {} (F);   
        \path [-](E) edge node[right] {} (G);  
        \path [-](F) edge node[right] {} (F);   
        
        \draw (D) circle (0.3);
        
        \draw (1.6,1) ellipse (1cm and 1.5cm);
        
        \draw[rotate=-56] (0.2,1.7) ellipse (1.3cm and 2cm);
        
        \draw (2,1) ellipse (3cm and 2cm);
    \end{tikzpicture}
    \caption{A $4$-coloured graph $G$ marked with the marking sequences $(\mathtt{yellow}, \mathtt{red}, \mathtt{cyan}, \mathtt{blue})$ and $(\mathtt{cyan}, \mathtt{blue}, \mathtt{yellow}, \mathtt{red})$.}
    \label{klocalexample}
\end{figure}

Since all words (strings, subsequences) can be interpreted as linear graphs, determining the locality of a graph remains $\NP$-complete (cf. \cite{cutwidth}). But similar to the locality of sort-of palindromic structured words \cite{localpatterns},
some graph classes and some graphs with a specific colouring yield an efficiently determinable locality.

Besides theoretical interest, the locality of graphs bears a variety of potential relationships to the machine learning and data science community.
In co-location pattern mining, a \textit{co-located pattern} refers to instances (objects) of different types (subsets of features) frequently occurring together at the same (or at a close) location \cite{Mamoulis2008,shekhar2001discovering,morimoto2001mining}.
Applications for co-location pattern mining are co-occurrences of buildings/companies/institutions \cite{masrur2019co}, social events, and biological and societal phenomena in temporal context.
In this context, one is interested in the largest locality a colour enumeration may provide.
Another related data mining task is motif discovery in gene or protein sequences. 
While the locality of words is researched as a structural complexity measure \cite{fleischmann}, this measure can contribute to finding motifs (frequently re-occurring patterns) and clustering of similar motifs.
More explicitly, according to \cite{yu2022method}, determining the length of motifs is still an open challenge and the authors of \cite{yu2022method} propose a deep learning based approach to determine the length.
The locality of words can provide here the means to estimate that motif size $\vert M \vert_{\mu}$.
Knowing the maximum number of blocks and the total sequence length $\vert S \vert$, the average motif length would be $\vert M \vert_{\mu} = \frac{\vert S \vert}{k}$.
Being aware that this is not an exact value, it can serve as an orientation for domain experts \textit{around} which value the motif length parameter can be chosen from.
This orientation for parameter selection is common, e.g., the silhouette score as a cluster quality metric \cite{rousseeuw1987silhouettes}, which informs a meaningful choice of cluster numbers for $k$-means and other variance minimising clustering algorithms.
However, not only the discovery of motifs alone requires the length of the motifs, but also their clustering \cite{hamady2008motifcluster} requires both their length and number.
All these applications require a feasible solution to compute the $k$-locality.
However, since an exhaustive search over all possible marking sequences would be infeasible, a more scalable solution is required.

\textbf{Our Contribution.} In this work, we extend the notion of $k$-locality to graphs.
We consider (not necessarily valid or minimally) coloured, undirected graphs and enumerations of the colours.
Similar as for the $k$-locality of words, we count the number of connected components (instead of blocks) within the graph after each marking step.
We present first theoretical results regarding the complexity and special graph classes.
Further, we propose a priority search algorithm that is optimal in the number of marking prefix expansions to compute the $k$-locality of graphs.
We demonstrate its efficiency on scale-free graphs and conduct a case study on the DBLP publication graph to show some of its potential for knowledge discovery.

\textbf{Structure of the Work.} 
In Section~\ref{prelims} all required definitions and notions are introduced. Afterwards our theoretical results and practical applications follow in Sections~\ref{theo} and \ref{pract}. In Section~\ref{conc} we give a perspective of future work.


	\section{Preliminaries}\label{prelims}
	Let  $\N = \{1,2,\ldots\}$ denote the natural numbers and set $\N_0 = \N
\cup \{0\}$ as well as $[n]=\{1,\ldots,n\}$ and $[i,n]=\{i, i+1, \ldots, n\}$ for all $i,n\in\N_0$ with $i \leq n$.
Set $\N_{\geq k}=\N\backslash[k-1]$ for a natural number $k\geq 2$. 
For $q,n,r,z$ with $q \cdot n+r = z$, we denote the division without remainder by $q = z \operatorname{div} n$ and the rest by $r = z \operatorname{mod} n$.
Before we can introduce the new notion of $k$-local graphs, we need some basic definitions from combinatorics on words and from graph theory.

An \emph{alphabet} $\Sigma$ is a finite set of symbols, called
\emph{letters}.  A \emph{word} $w$ is a finite sequence of letters from a given alphabet and its length $|w|$ is the number of $w$'s letters. For $i \in
[|w|]$ let $w[i]$ denote $w$'s \nth{$i$} letter.  The set of all
finite words (strings, sequences) over the alphabet $\Sigma$, is denoted by $\Sigma^{\ast}$. The empty word
$\varepsilon$ is the word of length $0$. Let $\Sigma^n$ denote all words in $\Sigma^{\ast}$ exactly of length $n\in\N_0$.
Set $\letters(w) = \{\ta \in \Sigma \mid \exists i \in [|w|]: w[i] = \ta \}$ as $w$'s alphabet. 
For $u,w\in\Sigma^{\ast}$, $u$ is called a \emph{factor}
of $w$, if $w = xuy$ for some words $x,y\in\Sigma^{\ast}$.  For $1\leq i\leq j\leq|w|$ 
define the factor from $w$'s \nth{$i$} letter to the \nth{$j$} letter by  $w[i,j]=w[i]\cdots w[j]$.
Further, given a pair of indices $i  < j$, set $w[j, i] = \varepsilon$.

A \emph{finite, undirected graph} $G$ is a pair $G=(V,E)$ with finite sets $V$ of vertices and edges $E \subseteq \{M\subseteq V\mid |M|=2\}$ (the latter set is abbreviated by $\binom{V}{2}$).
A \emph{path} in $G$ is a finite sequence of vertices $(v_1, \dots, v_k)$ with $\{v_i,v_{i+1}\}\in E$ for all $i\in[k-1]$ and $k \in \N_{\geq 2 }$. We say that this path has length $k-1$. Further, two vertices are \emph{connected} if there exists a path between them.
By convention set $n=|V|$ and $m=|E|$ and since we are only investigating finite, undirected graphs, we call them simply {\em graphs}. Moreover, given a graph $G$, let $V(G)$ and $E(G)$ denote its sets of vertices and edges resp.
For $v \in V$, define the \emph{connected component} of $v$ by
$C(v)=\{u\in V \mid \exists \mbox{ path from } u \mbox{ to } v \}$.
Let $\gamma(G)$ denote the number of connected components in $G$.
We say that $G$ is \emph{connected}, if $V = C(v)$ for all $v \in V$, i.e., $\gamma(G)=1$.
For $n \in \N$, the \emph{complete graph} on $|V| = n$ vertices is defined by $K_n = (V, \binom{V}{2})$. 
A graph $H = (U,F)$  is called a \emph{subgraph} of $G$ ($H \leq G$) if we have $U\subseteq V$ and $F\subseteq E\cap\binom{U}{2}$.	Moreover, we say that $H\leq G$ is \emph{induced} by $U$ if $F = E\cap \binom{U}{2}$; this is notated as $G|_{U}$. A fully connected subgraph of $G$ is called a \emph{clique}.
For $\ell \in \mathbb{N}$, a \emph{valid $\ell$-colouring} of $G$ is a function $c : V \to [\ell]$ such that $\{u,v\} \in E$ implies $c(u) \neq c(v)$ for all $u, v \in V$.
	If there exists a valid $\ell$-colouring for $G$, we say that $G$ is $\ell$-colourable. 
	Let $\mathcal C (G,c) = \{ c(v) \mid v \in V\}$ denote the set of colours that occur in $G$ w.r.t.~the colouring $c$.
	For $x \in \mathcal C (G,c)$ and a valid $\ell$-colouring $c$ with $\ell \in \mathbb{N}$, denote the \emph{set of all $x$-coloured vertices} by $V_x = \{v \in V \mid c(v) = x \}$. Further, let $G|_C = G|_{\bigcup_{x \in C} V_{x}}$ for $C \subseteq \mathcal C (G,c)$ be the {\em subgraph of $G$ induced by the colours in $C$}.

Now, we define the $k$-locality on graphs 
generalising the definitions from words (cf. \cite{localpatterns}). See Appendix~\ref{furtherdefs} for more basics on $k$-local words and several of the investigated graph classes. From now on let $G=(V,E)$ be an undirected graph.

\begin{definition}
	Let $\overline{V} = \{\overline{x} \mid x \in V\}$ be a copy of $V$ that we refer to as the {\em set of marked vertices} and $ c: V \rightarrow [\ell]$ with $\ell \in \N$ an $\ell$-colouring of $G$.
	For $G$ and $c$, we define a {\em marking sequence} $e$ of the colours occurring in $G$ as an enumeration $(x_1, x_2, \ldots, x_{|\mathcal C(G,c)|})$ of $\mathcal C (G,c)$ such that $x_i \leq_e x_j$, if $x_i$ is enumerated before $x_j$ in $e$ for $i, j \in [|\mathcal C (G,c)|]$.
	A vertex $v$ is marked in marking step $j \in \N$ if $c(v) = x_i$ for some $i \leq j$. Set $G_i = G|_{\{x_1,\ldots,x_i\}}$ for all $i \in [\ell]$, i.e., $G_i$ is the graph induced by the first $i$ colours.
\end{definition}

\begin{definition}\label{graphloc1}
	Let $e = (x_1, \ldots, x_{|\mathcal C (G,c)|})$ be a marking sequence of $\mathcal C (G,c)$.
	Define $\loc(G,c,e) = \max_{x_i \in e} |C(G_i)|$,
	i.e., the {\em locality of  $G$ w.r.t.~$e$} is given by the maximal number of connected components occurring during the marking process. Define the \emph{locality of $G$} by $\loc(G,c) = \min_{e} \loc(G,c,e)$.
\end{definition}

\begin{definition}\label{defgraphloc}
	A graph $G$ with an $\ell$-colouring $c$ is $k${\em -local} for $k,\ell \in \N$, if there exists a marking sequence $(x_1, \ldots, x_{|\mathcal C (G,c)|})$ of $\mathcal C (G,c))$ such that for all $i \leq |\mathcal C (G,c)|$,  $G_i$ contains at most $k$ connected components in marking step $i$. 
\end{definition}

\begin{definition}
	A graph $G$ is \emph{strictly $k$-local} if $G$ is $k$-local but not $(k-1)$-local.
\end{definition}

Notice that in general, we are not only interested in valid or minimal colourings. In the next section, we will show that a restriction to valid colourings is w.l.o.g. possible. We finish this section by revisiting the example from 	Figure~\ref{klocalexample}.  Consider the marking sequence $e= (\mathtt{yellow}, \mathtt{red}, \mathtt{cyan}, \mathtt{blue})$. When marking all $\mathtt{yellow}$ vertices, we obtain $2$ connected components. Then marking $\mathtt{red}$ we result in $4$ connected components. Marking $\mathtt{cyan}$ we still have $4$ components and then $\mathtt{blue}$ both result in one component. By \Cref{graphloc1} we have $\loc(G,c,e) = 4$. One can verify that we have $\loc(G,c,e')=1$ for the marking sequence $e' = (\mathtt{cyan}, \mathtt{blue}, \mathtt{yellow}, \mathtt{red})$. Thus, $\loc(G,c) = 1$.

 The computational model we use is the standard unit-cost RAM with logarithmic word size (cf. e.g.,~\cite{crochemore}).

	\section{Theoretical Results: Complexity and Graph Classes}\label{theo}
	We start this section with a reduction from the $\NP$-complete  problem $\Loc_w$ \cite{cutwidth} to the locality in the graph setting. First, we define the locality problem on graphs formally and obtain two immediate results from the locality on words.

\begin{problem}
	Let $\Loc_G$ be the problem of deciding whether for a connected graph with a valid $\ell$-colouring $c$, we have $\loc(G,c) \leq k$. The subproblem to decide whether we have $\loc(G,c) = 1$ is abbreviated by $\Loc_{G,1}$.
\end{problem}

\ifpaper 
\else    
\begin{proposition}\label{membership}
	Given $k \in \N$, a graph $G$, a valid $\ell$-colouring $c$ and a marking sequence $e = (c_1, \ldots, c_\ell)$, we can verify whether $\loc(G,c) \leq k$ in polynomial time.
\end{proposition}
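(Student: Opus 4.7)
The plan is to give a direct algorithmic verification: given the marking sequence $e=(c_1,\dots,c_\ell)$, I would simply simulate the marking process step by step, compute $\loc(G,c,e)$, and compare with $k$. If $\loc(G,c,e)\leq k$, then by \Cref{graphloc1} we have $\loc(G,c)=\min_{e'}\loc(G,c,e')\leq \loc(G,c,e)\leq k$, so exhibiting one such $e$ suffices as a certificate; this simultaneously places $\Loc_G$ in $\NP$.

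Concretely, I would proceed as follows. First, I precompute the partition $V=V_{c_1}\cup\dots\cup V_{c_\ell}$ of vertices by colour in $O(n)$ time. Then, for $i=1,\dots,\ell$, I incrementally build $G_i$ from $G_{i-1}$ by inserting the vertices in $V_{c_i}$ together with all edges of $E$ whose endpoints lie in $\bigcup_{j\leq i} V_{c_j}$. To count connected components efficiently, I maintain a union-find data structure over the currently marked vertices: inserting a vertex $v\in V_{c_i}$ first creates a new singleton component, and for each edge $\{u,v\}\in E$ with $u$ already marked, I call \emph{union} on the components of $u$ and $v$. After processing step $i$, I read off $\gamma(G_i)$ from the union-find structure and update the running maximum.

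Each edge is inspected at most twice across the entire run (once from each endpoint, when the later endpoint is marked), and each union/find operation runs in near-constant amortised time, so the total work is $O((n+m)\alpha(n))$, which is polynomial in the size of the input. At the end of the loop, the running maximum equals $\loc(G,c,e)$; comparing it with $k$ answers the verification question.

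The main step is conceptually routine — the only point deserving care is the incremental maintenance of $G_i$, so that one does not rebuild each subgraph from scratch (which would still be polynomial but unnecessarily wasteful). With the union-find approach above, the bound is clearly polynomial, which is all that the proposition requires.
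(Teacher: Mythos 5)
Your proposal is correct and follows essentially the same route as the paper: simulate the marking sequence, compute the number of connected components of $G_i$ at each step $i$, and compare the maximum with $k$. The paper simply recomputes the components from scratch at each step (linear time per step, hence polynomial overall), whereas you refine this with an incremental union-find to get $O((n+m)\alpha(n))$ — a nice implementation improvement, but not a different argument.
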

\begin{proof}
	In every marking step $i \in [\ell]$, we need to compute the (strongly) connected components of the induced subgraph $G|_{\{x_1, \ldots, x_i\}}$. Well-known results on strongly connected components allow for a linear time computation of those (strongly) connected components. 
\end{proof}
\fi

\begin{proposition}\label{completeness}
	For an undirected graph $G$, the problem $\Loc_G$ is $\NP$-complete.
\end{proposition}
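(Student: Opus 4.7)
The plan is to establish both membership in \NP{} and \NP{}-hardness. Membership is the easy direction: a marking sequence is a polynomial-size certificate (just a permutation of the at most $n$ colours), and Proposition~\ref{membership} already shows that, given such a sequence, one can compute the $\ell$ induced subgraphs $G_1, \ldots, G_\ell$ and count their connected components in polynomial time via standard DFS/BFS. So I would cite or re-derive this and move on.

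For hardness, I would reduce from $\Loc_w$ on words, which is known to be \NP{}-complete by \cite{cutwidth}. Given a word $w$ over alphabet $\Sigma$ and a threshold $k$, the reduction first condenses $w$ into a word $w'$ by collapsing each maximal run of identical consecutive letters to a single occurrence. Since consecutive identical letters in $w$ are always marked simultaneously and form a single block at every stage of every marking sequence, $\loc(w) = \loc(w')$; this condensation is clearly polynomial. Then I build the path graph $G_{w'} = (V,E)$ with $V = \{v_1, \ldots, v_{|w'|}\}$, $E = \{\{v_i, v_{i+1}\} \mid i \in [|w'|-1]\}$, and the colouring $c(v_i) = w'[i]$. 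By construction $G_{w'}$ is connected, and because consecutive letters in $w'$ differ, $c$ is a valid $\ell$-colouring with $\ell = |\letters(w')|$.

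The key correctness claim is that $\loc(w) \leq k$ iff $\loc(G_{w'}, c) \leq k$. Marking sequences over $\letters(w')$ and over $\mathcal{C}(G_{w'}, c)$ coincide, so it suffices to show that for every such sequence $e$ and every prefix length $i$, the blocks of marked positions in $w'$ after step $i$ are in bijection with the connected components of $G_i$. This is immediate from the path structure: an induced subgraph of a path has as its connected components exactly the maximal intervals of vertices retained, and those intervals correspond precisely to the maximal runs of letters with colours in $\{x_1, \ldots, x_i\}$ in $w'$. Hence $\loc(w', e) = \loc(G_{w'}, c, e)$ for every $e$, and taking the minimum yields the equivalence.

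The only real subtlety, and the place I would spend the most care, is the validity of the colouring: the definition of $\Loc_G$ requires a valid $\ell$-colouring, yet words may contain equal adjacent letters which would violate validity in the naive path construction. The condensation step is what resolves this, and I would state the locality-preservation of condensation as a small lemma (with a one-line argument by induction on marking steps) so that the reduction cleanly lands in the promised input class. Everything else — connectedness, polynomial running time, and the block/component correspondence — is then straightforward.
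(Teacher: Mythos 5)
Your proposal is correct and follows essentially the same route as the paper: NP-membership via the polynomial-time verification of a given marking sequence, and NP-hardness by condensing the word, encoding it as a validly coloured path graph, and observing that marked blocks correspond bijectively to connected components of the induced subgraphs. The only (cosmetic) difference is that you argue the per-sequence equality $\loc(w',e)=\loc(G_{w'},c,e)$ and take minima, whereas the paper splits into the yes- and no-instance cases explicitly.
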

\ifpaper 
\else    
\begin{proof}
	The $\NP$-membership follows from Proposition~\ref{membership}. Thus, it is left to show the $\NP$-hardness. First, consider the case that $(u',k) \in \Loc_w$. Now consider $u = \cond(u')$ and note that $\loc(u) = \loc(u')$. Thus, there exists a marking sequence $e = (x_1, \ldots, x_{|\Sigma|}) \in \Sigma^{|\Sigma|}$ such that $u$ is marked with at most $k$ blocks. First, we construct a graph $G = (\{u[i]_i \mid i \in [|u|]\}, \{\{u[i]_i,,u[i+1]_i \} \mid i \in [|u|-1]\})$ and a valid $|\al(u)|$-colouring $c:V \to \al(u)$ by $y_i \mapsto y$ for $i \in [|u|]$ (note that this is a valid colouring since $u$ is in condensed form). Now we are showing that $\loc(G,c) \leq k$ holds. 
	Let $(b_1, \ldots, b_{|\Sigma|})$ be the sequence of the number of blocks in each marking step from the marking sequence $e$. By the construction of the graph, we can conclude that for all $i \in [|\Sigma|]$ we have $|C(G|_\{x_1,\ldots, x_i\}) = b_i$ for $i \in [|\Sigma|]$. Since $b_i \leq k$ for all $i \in [|\Sigma|]$, we have that $\loc(G,c,e) = \max_{x_i \in e} |C(G|_{\{x_1, ..., x_i\}})| = \max_{i \in |\Sigma|} b_i \leq k$. This $(G,c,k) \in \Loc_G$.
	
	In the case $(u',k) \notin \Loc_w$ we obtain $\loc_{G,c,e} = \max_{x_i \in e} |C(G|_{\{x_1, ..., x_i\}})| = \max_{i \in |\Sigma|} b_i > k$ following the same line of arguments as above. Thus, $(G,c,k) \notin \Loc_G$.
\end{proof}
\fi

\begin{theorem}
 For every undirected graph $G$, we have $\Loc_{G,1}\in\mathsf{P}$.
\end{theorem}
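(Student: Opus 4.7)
The plan is to give a polynomial-time algorithm driven by two structural consequences of $c$ being a \emph{valid} colouring. \emph{Observation 1}: for any single colour $x$, the induced subgraph $G|_{V_x}$ is edgeless, so $G_1 = G|_{V_{x_1}}$ is connected if and only if $|V_{x_1}| = 1$; hence every marking sequence witnessing $\loc(G,c) = 1$ must begin with a singleton colour class. \emph{Observation 2}: if $G_i$ is already connected, then $G_{i+1}$ is connected if and only if every vertex of $V_{x_{i+1}}$ has a neighbour in $V(G_i)$, because $V_{x_{i+1}}$ is an independent set and each newly added vertex therefore must be attached directly to the existing component. Call an unmarked colour $x$ \emph{ready} for a set $S$ of already-marked colours if every vertex of $V_x$ has a neighbour in $\bigcup_{y \in S} V_y$; note that readiness is monotone in $S$.

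The algorithm iterates over every singleton colour $y \in \mathcal{C}(G,c)$ and, starting from $S = \{y\}$, greedily adds any ready colour to $S$ until none remains; it accepts if and only if some run terminates with $S = \mathcal{C}(G,c)$. For soundness, the order in which greedy adds colours is by construction a marking sequence in which every $G_i$ stays connected (Observations 1 and 2), so $\loc(G,c) = 1$. For completeness, suppose some marking sequence $(y = x_1, x_2, \ldots, x_\ell)$ witnesses $\loc(G,c) = 1$ (with $|V_y| = 1$ forced by Observation 1); then the greedy run starting at $y$ cannot get stuck before exhausting all colours. Indeed, at any intermediate set $\{y\} \subseteq S \subsetneq \mathcal{C}(G,c)$, pick the smallest $i$ with $x_i \notin S$. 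Then $\{x_1,\ldots,x_{i-1}\} \subseteq S$, and by validity of the original sequence every vertex of $V_{x_i}$ has a neighbour in $\bigcup_{j < i} V_{x_j} \subseteq \bigcup_{z \in S} V_z$, so $x_i$ is ready for $S$ and greedy can always proceed.

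The running time is clearly polynomial: there are at most $\ell \leq n$ singleton colours to try, each greedy run performs at most $\ell$ insertions, and readiness of all remaining colours can be certified by scanning adjacency lists in $O(m)$ time per insertion, giving a straightforward $O(\ell^2 m)$ bound. I expect the only mildly subtle point to be the completeness argument, which rests on the monotonicity of readiness; everything else is a direct consequence of the valid-colouring hypothesis restricting what the first marking step and each subsequent extension can look like, collapsing what looks like a search over $\ell!$ permutations into a linear number of greedy sweeps.
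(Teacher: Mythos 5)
Your proof is correct and follows essentially the same approach as the paper's: the paper nondeterministically guesses the first colour (which you determinize by trying every singleton colour class) and then greedily marks any colour all of whose vertices are adjacent to an already-marked vertex, with completeness established by the same "first unmarked colour of the witnessing sequence is still ready" exchange argument. Your write-up is somewhat more careful (making the singleton-first-colour observation and the monotonicity of readiness explicit), but it is not a different route.
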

\begin{proof}
	Let us firstly sketch a nondeterministic algorithm that decides this
	problem: Guess the first colour that occurs in the graph, and mark
	all vertices in the graph. Now as long as possible pick a colour for
	which every vertex is connected to a marked vertex. If there is no
	vertex left in the graph then stop and accept the input.
	
	The algorithms finds only valid colour
	orderings. However, it remains to show that the algorithm always
	finds such an ordering if it exists. Assume there is a valid colour
	ordering, then the algorithm would non-deterministically guess the
	first colour correctly. Now assume that our algorithm would not
	accept. This can only happen when all remaining colours are connected
	to at least on not marked vertex. Consider all colours of the valid
	ordering, there is a first colour that has not been marked yet. The
	valid ordering guarantees that the number of connected components
	would not exceed 1. Indeed there are already more vertices marked,
	however since these are all connected to the first colour they would
	not increase the number of components. 
	
	Since we only have a constant number of guesses and a polynomial
	number of possible vertices, the algorithm runs deterministically in
	polynomial time.
\end{proof}

Note that we cannot restrict the investigations of $k$-locality w.l.o.g.~to connected graphs since the locality of a graph does not depend (to our knowledge) just on the locality of the connected components. 	Consider the two graphs $G$ and $G'$ from Figure~\ref{connectedexample}. One can verify with the respective marking sequences that all the connected components of $G$ and $G'$
each are $1$-local. But both possible marking sequences for $G$ lead to $3$-locality. Also in $G'$ both connected components
are $1$-local; $G'$ itself is $2$-local witnessed by  $(\mathtt{red}, \mathtt{yellow},\mathtt{blue})$.
	\begin{figure}
		\centering
		\begin{tikzpicture}[scale=0.7]
			\node (g) at (-0.5,0) {$G$};
			\node[shape=circle,draw=black,fill=yellow] (A) at (0,0) {};
			\node[shape=circle,draw=black,fill=red] (B) at (1,0) {};
			\node[shape=circle,draw=black,fill=yellow] (C) at (2,0) {};
			\node[shape=circle,draw=black,fill=red] (D) at (0,-1) {};
			\node[shape=circle,draw=black,fill=yellow] (E) at (1,-1) {};
			\node[shape=circle,draw=black,fill=red] (F) at (2,-1) {} ;
			
			\node (g') at (3.5,0) {$G'$};
			\node[shape=circle,draw=black,fill=yellow] (G) at (4,0) {} ;
			\node[shape=circle,draw=black,fill=red] (I) at (5,0) {} ;
			\node[shape=circle,draw=black,fill=yellow] (J) at (6,0) {} ;
			\node[shape=circle,draw=black,fill=blue] (K) at (7,0) {} ;
			\node[shape=circle,draw=black,fill=blue] (L) at (4,-1) {} ;
			\node[shape=circle,draw=black,fill=red] (M) at (5,-1) {} ;
			\node[shape=circle,draw=black,fill=blue] (N) at (6,-1) {} ;
			
			\path [-] (A) edge node[left] {} (B);
			\path [-](B) edge node[left] {} (C);
			
			\path [-](E) edge node[left] {} (F);
			\path [-](D) edge node[left] {} (E);

			\path [-](G) edge node[right] {} (I);   
			\path [-](I) edge node[right] {} (J);  
			\path [-](J) edge node[right] {} (K);
			
		 	\path [-](L) edge node[right] {} (M);
		 	\path [-](M) edge node[right] {} (N);
		\end{tikzpicture}
		\caption{A $2$-coloured graph $G$ and a $3$-coloured graph $G'$.}
		\label{connectedexample}
	\end{figure}
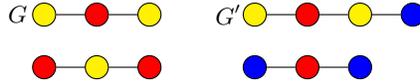
Note that for any coloured graph~$G$ with colouring $c$, we have $\loc(G,c)\geq\gamma(G)$.

\begin{proposition}\label{prop8}
	For every graph $G=(V,E)$ there is a (valid) colouring~$c$ such that $\loc(G,c)=\gamma(G)$.
\end{proposition}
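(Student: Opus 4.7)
The plan is to exhibit a valid colouring $c$ that realises the lower bound $\loc(G,c) \geq \gamma(G)$ noted just before the proposition. Since the statement only asks for \emph{some} valid colouring, I would take the simplest possible choice: give every vertex its own unique colour. Then validity is automatic (adjacent vertices necessarily receive distinct colours), and the entire task reduces to finding one marking sequence $e$ of these $|V|$ colours with $\loc(G,c,e) \leq \gamma(G)$.

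To construct such a sequence, I would enumerate the connected components of $G$ as $C_1, \ldots, C_\gamma$, where $\gamma = \gamma(G)$. For each $C_j$, I would fix an arbitrary spanning tree $T_j$ rooted at some vertex $r_j$ and let $v_1^j, v_2^j, \ldots, v_{n_j}^j$ be the vertices of $C_j$ in DFS pre-order on $T_j$. The key ingredient I will use is the classical fact that every prefix $\{v_1^j, \ldots, v_i^j\}$ induces a connected subgraph of $C_j$, because each newly appended vertex is adjacent in $T_j$ to a vertex already present (its DFS parent).

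Next, I would take $e$ to be the concatenation of these per-component orderings: first mark all vertices of $C_1$ in DFS order, then all of $C_2$, and so on up to $C_\gamma$. While the marking is inside $C_j$ (after $C_1, \ldots, C_{j-1}$ have been fully marked), the induced subgraph $G_i$ consists of the $j-1$ already-finished components — each still forming one connected component of $G_i$, since $G$ has no edges between distinct $C_{j'}$ — together with a connected DFS-prefix of $C_j$. Hence $G_i$ has exactly $j \leq \gamma$ connected components at every step, giving $\loc(G,c,e) \leq \gamma(G)$. Combined with $\loc(G,c) \geq \gamma(G)$, this yields $\loc(G,c) = \gamma(G)$.

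There is no real obstacle. The conceptual point worth highlighting is that the proposition does not ask for a \emph{minimum} colouring, so assigning each vertex its own colour sidesteps the validity constraint entirely and leaves only the scheduling problem. The only combinatorial ingredient — that DFS pre-order on a spanning tree has connected prefixes — is standard, and synchronising the growth of each component via the concatenation above ensures that the number of components never exceeds $\gamma(G)$ during the marking process.
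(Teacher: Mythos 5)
Your proposal is correct and takes essentially the same approach as the paper: both assign each vertex its own colour and then exhibit a marking order in which the marked portion of each component stays connected, so the component count never exceeds $\gamma(G)$. The only cosmetic difference is that the paper seeds one vertex per component first and then grows all components greedily, whereas you finish each component in DFS pre-order before starting the next; both orderings work.
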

\begin{proof}
	
	If $c$ is restricted to be a valid colouring, then consider a
	colouring where every colour appears only for one vertex. We can find
	a permutation that certifies the claimed locality. Just iterate over
	all connected components, and each time pick an arbitrary vertex.
	Then, the induced graph has exactly $\gamma(G)$ components/single
	vertices. Then, greedily pick remaining vertices that are in the
	neighbourhood of the already chosen ones.
\end{proof}

\begin{remark}\label{everygraph1local}
	Note that Proposition~\ref{prop8} implies that there always exists a (not necessarily minimal) valid colouring for any connected graph $G$ such that $G$ is $1$-local for this colouring.
\end{remark}

The more general version with non-connected graphs is at least as hard as the more
restricted version because computing components can be done in
polynomial time. 
In this work, we only investigate connected graphs. The following proposition justifies why we mostly consider validly coloured graphs. The idea is to contract adjacent vertices coloured with the same colour. 

\begin{proposition}
	For every graph $G$ with colouring $c$ there exists a validly coloured (by $c$) induced minor $G'$ of $G$ such that
	$\loc(G,c)=\loc(G',c)$.
\end{proposition}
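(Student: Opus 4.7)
The plan is to construct $G'$ explicitly by following the hint: for every colour $x \in \mathcal C(G,c)$ and every connected component of the monochromatic induced subgraph $G|_{V_x}$, contract that component to a single vertex and retain the colour $x$. First I would check that $G'$ is an induced minor of $G$: the construction consists only of edge contractions (no vertex deletions are necessary, as every colour still has at least one representative). Second, the colouring $c$ inherited by $G'$ is valid by construction — if two $G'$-adjacent vertices shared a colour $x$, the corresponding edge in $G$ would lie inside a single connected component of $G|_{V_x}$ and hence would already have been contracted away.

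The core step is to prove $\loc(G,c)=\loc(G',c)$. Since contractions preserve which colours appear, $\mathcal C(G,c) = \mathcal C(G',c)$, so both graphs admit exactly the same set of marking sequences. Fix a marking sequence $e = (x_1,\ldots,x_\ell)$ and any marking step $i\in[\ell]$. I would then observe the key bookkeeping fact: each subgraph that was contracted is monochromatic in some single colour $x_j$, so it lies entirely inside $V(G_i)$ when $j \le i$ and is entirely disjoint from $V(G_i)$ when $j > i$. Consequently, $G'_i$ is obtained from $G_i$ by contracting a pairwise vertex-disjoint family of connected subgraphs of $G_i$.

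Now I would invoke the elementary fact that contracting a connected subgraph of any graph does not alter the number of connected components of that graph; applying it once per contracted subgraph yields $\gamma(G_i) = \gamma(G'_i)$ for every $i$. Hence $\loc(G,c,e) = \loc(G',c,e)$ for every marking sequence $e$, and taking the minimum over $e$ on both sides gives $\loc(G,c) = \loc(G',c)$.

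The main obstacle I anticipate is not algebraic but definitional: one must be careful that every monochromatic connected component sits entirely on one side of the marked/unmarked cut at every marking step, so that the contraction and the induced-subgraph operations commute. Once that synchronisation is stated cleanly, the rest reduces to the routine observation about contraction preserving connected-component counts, and the induced-minor property is immediate from the construction.
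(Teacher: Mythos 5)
Your proposal is correct and follows essentially the same route as the paper: contracting the monochromatic components (the paper does this edge by edge, which yields the same graph) and observing that the locality is unchanged. Your justification of the locality-preservation step --- synchronising each monochromatic component with the marked/unmarked cut and invoking that contracting a connected subgraph preserves the component count --- is in fact more careful than the paper's one-line argument.
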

\begin{proof}
	Consider  an edge $\{v,w\}\in E$ with
	$c(v)=c(w)$. For every edge $\{w,u\}\in E$ where $u\neq v$ replace
	$\{w,u\}$ by $\{v,u\}$, and finally remove $u$ and $\{v,w\}$ from
	$G$. Repeat this step, until no such an edge exists. Thus, build the minor induced by contracting adjacent vertices coloured with the same colour. This graph is moreover validly coloured. Since the colours occurring in the neighbourhood of a vertex remains the same, the locality does not change.
\end{proof}

A last result on arbitrary graphs narrows the locality a graph may have.

\begin{theorem}
	For all graphs $G$ coloured by $c$, we have $\gamma(G)\leq \loc(G,c)\leq \alpha(G)$ where $\alpha(G)$ denotes the size of a maximal independent set.
\end{theorem}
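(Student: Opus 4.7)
The plan is to split the two inequalities: the lower bound is essentially already observed in the text preceding Proposition~\ref{prop8}, while the upper bound will follow by a uniform argument valid for \emph{every} marking sequence, using that connected components of an induced subgraph furnish an independent set in the whole graph.

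For the lower bound $\gamma(G)\leq\loc(G,c)$, I would simply recall that for any marking sequence $e=(x_1,\ldots,x_{\ell})$, the last induced graph $G_{\ell}$ coincides with $G$ and hence has exactly $\gamma(G)$ connected components. Therefore $\loc(G,c,e)=\max_{i}|C(G_i)|\geq\gamma(G_{\ell})=\gamma(G)$, and taking the minimum over all marking sequences preserves the inequality. This reproves the remark stated just before Proposition~\ref{prop8}.

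For the upper bound $\loc(G,c)\leq\alpha(G)$, it suffices to fix an arbitrary marking sequence $e$ and show $\loc(G,c,e)\leq\alpha(G)$; the minimum over $e$ will then also be bounded by $\alpha(G)$. Fix any marking step $i\in[\ell]$ and let $C_1,\ldots,C_t$ be the connected components of $G_i$, so $t=|C(G_i)|$. Pick one representative $v_j\in C_j$ for each $j\in[t]$ and set $S=\{v_1,\ldots,v_t\}$. The key observation is that $S$ is independent in $G$: if two distinct representatives $v_j,v_{j'}$ were adjacent in $G$, then since both belong to $V(G_i)$ and $G_i=G|_{\{x_1,\ldots,x_i\}}$ is an \emph{induced} subgraph, the edge $\{v_j,v_{j'}\}$ would also lie in $E(G_i)$, placing $v_j$ and $v_{j'}$ in the same connected component of $G_i$, contradicting their choice. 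Hence $t=|S|\leq\alpha(G)$, and taking the maximum over $i$ yields $\loc(G,c,e)\leq\alpha(G)$.

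I do not expect any real obstacle here; the only subtlety is making sure one uses the fact that $G_i$ is induced rather than just a subgraph, so that non-adjacency inside $G_i$ lifts to non-adjacency in $G$. With that observation in hand, the argument is essentially a one-line reduction from ``number of components of $G_i$'' to ``independent set in $G$''.
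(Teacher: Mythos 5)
Your proof is correct and follows essentially the same route as the paper: the lower bound from the fully marked final stage, and the upper bound by picking one representative per connected component to form an independent set. If anything, your version is slightly more careful than the paper's, which loosely says the representatives are ``pairwise not connected in $G$'' where it means pairwise non-adjacent; your explicit appeal to $G_i$ being an \emph{induced} subgraph is exactly the point that makes the independence claim go through.
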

\begin{proof}
The lower bound follows immediately from the fact that for vertices of different connected components are not connected.
For the upper bound consider a marking sequence
	$(x_1,\dots,x_{\ell})$ satisfying $\loc(G,c)=k$. Thus, there exists $t\leq \ell$ such that
	\[
	\gamma(G|_{\bigcup_{j=1}^{t} V_{c_j}})=\loc(G,c),
	\]
	i.e., at marking stage $t$, $k$ connected components are reached. Pick a set $I$ of $k$
	many vertices that are pairwise not connected in $G$ -  one vertex of each component. Hence, $I$ is
	an independent set of size $\loc(G,c)$, so the maximum independent
	set in $G$ has size at least $\loc(G,c)$, which completes the proof.
\end{proof}

The remainder of this section dedicated to several graph classes and is meant a beginning of this line of research.  We start with cycles, stars, complete, wheel, web and friendship graphs (cf. \Cref{tablegraphs}).
Due to Remark~\ref{everygraph1local} we focus on validly (not necessarily minimally) coloured graphs.

\begin{remark}
The triangle $C_3$ is $1$-local for every colouring. For $n\geq 4$, the cycle graph $C_n$ is $2$-colourable if $n$ is even and $3$-colourable otherwise. Thus,  minimal coloured cycle graphs are $\min\{|V_1|,|V_2|\}$-local for the sets  of vertices $V_1,V_2$ containing at least two nodes.
\end{remark}

\begin{proposition}\label{completestar}
	Complete, star, wheel and friendship graphs are $1$-local independent of the colouring.
\end{proposition}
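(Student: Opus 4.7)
My plan is to exhibit, for each of the four graph families, a marking sequence that keeps the induced subgraph connected throughout. The key observation that unifies three of the four cases is the existence of a \emph{universal vertex}, i.e.\ a vertex adjacent to every other vertex of the graph.

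First I would dispatch the complete graphs. For any colouring $c$ of $K_n$ and any marking sequence $(x_1,\ldots,x_{|\mathcal C(G,c)|})$, the subgraph $G_i=K_n|_{V_{x_1}\cup\cdots\cup V_{x_i}}$ is itself a complete graph on its vertex set, because every pair of vertices in $K_n$ is joined by an edge. Thus $G_i$ is connected at every step $i$, so $\loc(K_n,c,e)=1$ for every valid or invalid colouring $c$ and every marking sequence $e$.

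Next I would treat the star, wheel, and friendship graphs uniformly. In each of these families there is a distinguished vertex $u$ that is adjacent to every other vertex: the centre of a star, the hub of a wheel, or the common vertex of a friendship graph (which lies in every triangle). Given an arbitrary colouring $c$, I would pick any marking sequence $e=(x_1,\ldots,x_{|\mathcal C(G,c)|})$ beginning with $x_1=c(u)$. After step $1$ the marked set consists of $u$ together with all other vertices coloured $c(u)$; by universality, every such vertex is adjacent to $u$, so $G_1$ is connected and has exactly one connected component. For $i\geq 2$, every vertex newly marked at step $i$ is adjacent to $u$, which has already been marked; hence each such vertex attaches to the single component containing $u$. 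By induction on $i$, $G_i$ has exactly one connected component at every marking step, so $\loc(G,c,e)=1$.

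The only minor subtlety is that the colouring is not required to be valid, so the universal vertex $u$ may share its colour with other vertices that get marked simultaneously in step $1$; but the universality of $u$ makes this harmless, since any co-coloured vertex is adjacent to $u$ and therefore lands in the same component. I expect no real obstacle beyond verifying that each of the three families indeed contains a universal vertex, which is immediate from their standard definitions (see Appendix~\ref{furtherdefs} / Table~\ref{tablegraphs}).
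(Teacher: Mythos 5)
Your proof is correct and follows essentially the same route as the paper: mark the colour of the universal (central) vertex first, after which every subsequent vertex attaches to its component. Your handling of the complete graph (every induced subgraph of $K_n$ is connected, so any marking sequence works for any colouring) is in fact slightly cleaner than the paper's, which implicitly leans on the colouring being valid so that each colour class is a singleton.
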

\begin{proof}
	Let $c$ be an arbitrary colouring of a complete graph $K_n = (V,E)$ on $n$ vertices. Since $|V_c| = 1$ holds for all $c \in \bigcup_{v \in V} c(v)$ the claim follows.
	
	In a star we can pick the colour of the vertex that has more than one
	neighbour first. Since, there is the only one such vertex, which is
	also connected to all other vertices, we have at most one component
	after this step. Now, we can pick the remaining colours in an
	arbitrary ordering. The number of components remains one, because
	every other vertex is connected with the first vertex.
	
	By the same argument as for star graphs, we can start with the central vertex for wheel and friendship graphs, too. Now, every other vertex is connected to the marked one. Thus, the remaining colours can be picked again in an arbitrary order.
\end{proof}

\begin{proposition}
	If $\ell \in \N_{\geq 2}$ is even, web graphs are $\frac{\ell r}{2}$-local for every valid $2$-colouring. If $\ell$ is odd, there exists a $3$-colouring such that $W_{\ell,r}$ is $(\ell \operatorname{div} 3) r$-local.
\end{proposition}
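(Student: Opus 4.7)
My plan treats the two parities of $\ell$ separately. For even $\ell$, I would first observe that $W_{\ell,r}$ is bipartite: assigning colour $(i+p)\bmod 2$ to the vertex at radial position $p$ of the $i$-th cycle gives a valid $2$-colouring in which every cycle edge and every spoke crosses the bipartition. Consequently every valid $2$-colouring of $W_{\ell,r}$ splits $V$ into two independent sets of size $\ell r/2$. Under any marking sequence, the first colour to be marked contributes $\ell r/2$ pairwise non-adjacent vertices and thus $\ell r/2$ connected components, while marking the second colour reduces the total to one component in the connected graph. Hence $\loc(W_{\ell,r},c)=\ell r/2$ for every valid $2$-colouring.

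For odd $\ell$, I would construct a $3$-colouring explicitly and exhibit a marking sequence achieving the claimed bound. Set $k=\ell\operatorname{div} 3$. In each cycle I would repeat the period-$3$ block $(1,2,3)$ exactly $k$ times and (if $\ell\not\equiv 0\pmod 3$) append a short $(1,2)$-tail of length $\ell-3k\in\{1,2\}$ placed so that the cycle closes validly. Between consecutive cycles I would rotate the pattern by a fixed offset so that every spoke connects vertices of distinct colours and the $k$ colour-$3$ positions of consecutive cycles are pairwise disjoint. With this colouring in hand, the marking sequence $(3,1,2)$ first yields $kr$ isolated vertices, giving $(\ell\operatorname{div}3)r$ components; then each subsequently marked colour-$1$ vertex is adjacent (via a cycle edge or a spoke) to an already-marked colour-$3$ vertex, and similarly for colour~$2$, so the component count does not rise above $kr$ in any later step.

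The main obstacle is the combinatorial verification of the colouring when $\ell\not\equiv 0 \pmod 3$. Here the naive period-$3$ pattern fails to close, and the tail vertices may not be adjacent to any colour-$3$ vertex inside their own cycle. A short case analysis on $\ell\bmod 6$ would show that the shift between consecutive cycles can be chosen so that every such tail vertex is adjacent, via its spoke, to a colour-$3$ vertex in a neighbouring cycle, thus restoring the invariant needed for the marking argument. The remaining check --- that marking colour~$2$ after colours $3$ and $1$ does not create isolated vertices --- is immediate from the same adjacency analysis of the periodic block.
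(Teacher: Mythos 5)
Your even case is correct and follows the same route as the paper: $W_{\ell,r}$ is connected and bipartite for even $\ell$, so its valid $2$-colouring is unique up to renaming, both classes have $\ell r/2$ vertices, and marking one class and then the other gives $\ell r/2$ components followed by one.

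The odd case has a genuine gap, and it sits precisely where you defer to ``a short case analysis on $\ell \bmod 6$''. That deferred verification is the entire substance of the claim, and for $\ell\not\equiv 0\pmod 3$ it cannot be completed: no choice of inter-cycle shifts makes every colour-$1$ and colour-$2$ vertex adjacent to a colour-$3$ vertex. Concretely, take $\ell=5$, where your pattern degenerates to one colour-$3$ vertex per cycle and the claimed bound is $r$. Every proper $3$-colouring of $C_5$ splits it as $1+2+2$, so every colour class of a valid $3$-colouring of $W_{5,r}$ has at least $r$ vertices, and the first marked class yields at least $r$ components, with equality only when one colour is the per-cycle singleton in every cycle. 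The spoke constraints then force the singletons of consecutive cycles to sit at cyclically adjacent positions with the same alternation on the remaining $4$-vertex path, and one checks that each cycle retains a vertex of the second marked colour all of whose neighbours (two cycle neighbours and up to two spoke neighbours) carry the third colour; marking the second colour therefore creates at least one fresh singleton per cycle on top of a component containing the first-marked vertices, i.e.\ strictly more than $r$ components. So $W_{5,r}$ is not $r$-local under any valid $3$-colouring, and no choice of shifts can rescue your invariant (the statement itself appears to fail here). For comparison, the paper's proof uses a different colouring --- one special colour used once per cycle and the other two alternating on the remaining path, rotated by one between layers --- whose classes have sizes $r$, $(\ell-1)r/2$ and $(\ell-1)r/2$; your period-$3$ colouring is actually better matched to the stated bound $(\ell\operatorname{div}3)r$, but both arguments founder on the same unestablished domination property. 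Only the sub-case $\ell\equiv 0\pmod 3$ of your plan (pure period-$3$ pattern, shift by one between consecutive cycles) goes through cleanly.
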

\ifpaper 
\else    
\begin{proof}
	For even $\ell$, the $\frac{\ell r}{2}$-locality follows directly from the only possibility to validly $2$-colour $W_{\ell,r}$. There are $\ell r$ vertices in total where half of them have the same colour. For odd $\ell$, we are going to define a $3$-colouring and show its  $(\ell \operatorname{div} 3) r$-locality. Consider a web graph consisting of $r$ $\ell$-cycles $u_1^1 \cdots u_1^\ell$, $\ldots, u_r^1\cdots u_r^\ell$ and the resp. connections $u_1^i \cdots u_r^i$ for $i \in [\ell]$. We define the colouring function $c$ for $\mathtt{r},\mathtt{g},\mathtt{b} \in \N$, $i \in [\ell], j \in [r]$ as follows:
	\begin{align*}
		u_j^i \mapsto c(v) = \begin{cases}
			\mathtt{b}, & \text{if } j \text{ odd } \land i=1,\\
			\mathtt{r}, &  \text{if } j \text{ odd } \land i \text{ even},\\
			\mathtt{g}, &  \text{if } j \text{ odd } \land i\neq 1 \land i \text{ odd},\\
			\mathtt{b}, & \text{if } j \text{ even } \land i=\ell,\\
			\mathtt{r}, &  \text{if } j \text{ even } \land i\neq \ell \land i \text{ odd},\\
			\mathtt{g}, &  \text{if } j \text{ even } \land i \text{ even.}
		\end{cases}
	\end{align*}
	So roughly speaking, the colouring of the inner circle is the same as for every copy of the circle in an odd layer and rotated once for any copy of an even layer.
	By definition of $c$, there exists a colour $x\in \{\mathtt{r},\mathtt{g}\}$ such that $(\ell \operatorname{div} 3)r$ vertices are $x$-coloured. Let $y$ denote the other colour. This colour is the first in the marking sequence we define by $e = (x,y,\mathtt{b})$. By marking $x$, we obtain $(\ell \operatorname{div} 3)r$ connected components. By construction, every $x$-coloured vertex is connected to a $y$-coloured one which shows the claim.
\end{proof}

\fi

\begin{proposition}
	For a minimal valid colouring, sunflower graphs $S_d$ and helm graphs $H_d$ for $d\in \N$ are $\lfloor \frac{d-1}{2}\rfloor$-local.
\end{proposition}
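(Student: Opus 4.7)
The plan is to handle the helm graph $H_d$ and the sunflower graph $S_d$ in parallel by constructing an explicit minimum valid colouring together with a good marking sequence, and then arguing optimality.

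First I would describe the shared structure: both graphs consist of a central hub $c$ adjacent to every vertex of an outer cycle of length $d$, augmented by appendages---a pendant hanging from each cycle vertex in $H_d$, or a petal sharing an edge with two consecutive cycle vertices in $S_d$. Since the hub is adjacent to every cycle vertex, its colour must differ from all colours used on the cycle. Moreover, no appendage is adjacent to the hub, so in a minimum valid colouring all appendages may reuse the hub's colour. This yields three colour classes when $d$ is even and four when $d$ is odd (the extra class stemming from the odd-cycle chromatic constraint), with cycle-class sizes controlled by the parity of $d$; in particular a singleton cycle class naturally appears when $d$ is odd.

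The next step is the marking sequence: mark the cycle colour classes first in increasing order of size, and finish with the hub/appendage class. At the first step the marked vertices are pairwise non-adjacent (validity of the colouring), so the component count equals the size of the first class. Each subsequent cycle class merges components through the cycle edges, so once the entire cycle is marked its induced subgraph is a single component. The final marking adds the hub (adjacent to every cycle vertex) and every appendage (adjacent to its already-marked parent, or the two already-marked consecutive parents in the sunflower case), leaving the component count at one. Hence the peak component count is attained at the first step and equals the size of the smallest cycle colour class, which a case analysis on the parity of $d$ shows equals $\lfloor (d-1)/2 \rfloor$.

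The matching lower bound requires showing that no other minimum valid colouring or marking sequence can beat this. I would argue that in any such colouring the smallest cycle colour class still has at least $\lfloor (d-1)/2 \rfloor$ vertices, a consequence of the chromatic structure of $C_d$, so marking any cycle class first produces at least that many components; starting with the hub/appendage class is strictly worse because every appendage immediately appears as an isolated vertex. The main obstacle I expect is controlling every minimum valid colouring in the lower-bound step, since for even $d$ there is some freedom in distributing the appendages among the cycle colours, and one must verify that no such reshuffling shifts the peak below the stated target.
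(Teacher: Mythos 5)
Your construction and marking strategy are essentially the paper's: colour the outer cycle with two colours (three if the cycle is odd), give the hub a fresh colour, reuse the hub colour on all pendants and petal tips, then mark the cycle classes smallest-first and the hub/appendage class last. However, there are two concrete slips. First, with the paper's definitions $S_d$ and $H_d$ are built on the wheel $W_d$, whose outer cycle has $d-1$ vertices, not $d$; your parity analysis is keyed to the wrong parameter, and under your reading (a $d$-cycle) the even case gives two cycle classes of size $d/2$ each, so the peak would be $d/2$, which strictly exceeds $\lfloor(d-1)/2\rfloor$ and the claimed "case analysis on the parity of $d$" cannot recover the stated bound. With the correct cycle length $d-1$ the arithmetic does close: for $d-1$ even both classes have size $(d-1)/2$, and for $d-1$ odd one may take class sizes $1$, $(d-2)/2$, $(d-2)/2$. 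Second, in the odd-cycle case the smallest cycle class is the singleton, so the peak is neither attained at the first step nor equal to the size of the smallest class; it is attained when the second class is marked, and bounding it by $(d-2)/2=\lfloor(d-1)/2\rfloor$ requires observing that at least one cycle neighbour of the singleton lies in that second class (true for the alternating colouring, which you are free to choose). Finally, note that being $k$-local is an existential upper-bound statement (there must exist one marking sequence staying within $k$ components), so the lower-bound section you identify as the main obstacle is not required for this proposition; the paper likewise only exhibits the colouring and the sequence and adds a one-line remark that marking the hub colour earlier is worse.
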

\ifpaper 
\else    
\begin{proof}
Since sunflower and helm graphs consist of a cycle, we can conclude that a minimal colouring needs to consist of at least $2$ colours if $(d-1)$ is even and $3$ colours of $(d-1)$ is odd. Let those colours be denoted by $r,g,b\in \N$ where $b$ is the colour only used if $(d-1)$ is odd. Furthermore, the inner vertex is connected to all vertices of the cycle and thus needs to be coloured in another colour $y\in \N$. The same colour as for the inner vertex can be used for the outer cycles $C_3$. Thus, the marking sequence $e=(b,r,g,y)$ results in a $\lfloor(d-1)/2\rfloor$-locality of $S_d$ since there exist $(d-1)+1=d$ vertexes coloured in $y$, so marking $y$ before $r$ and $g$ leads to a higher locality. 
\end{proof}
\fi

We now focus on the class of bipartite graphs for which we can determine the locality for a $2$-colouring by inspecting the number of vertices coloured in each of the two colours.

\begin{proposition}
	Let $G=(V_1\dot{\cup} V_2,E)$ be a bipartite, graph that is $2$-coloured. Then $G$ is strictly $\min\{|V_1|,|V_2|\}$-local.
\end{proposition}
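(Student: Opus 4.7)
The plan is to exploit the very restrictive nature of a valid 2-colouring of a bipartite graph: there are only two marking sequences to consider, and each one splits cleanly into a first step where the marked subgraph is totally disconnected and a second step where it equals $G$.

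First, I would note that since $c$ is a valid $2$-colouring of $G = (V_1 \dot\cup V_2, E)$, the two colour classes must coincide with $V_1$ and $V_2$: every edge of $G$ goes between $V_1$ and $V_2$, so the bipartition is in fact the colour partition. Write $c_i$ for the colour of $V_i$. Then $\mathcal C(G,c) = \{c_1, c_2\}$ and there are exactly two marking sequences, $e_1 = (c_1, c_2)$ and $e_2 = (c_2, c_1)$.

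Next, I would compute $\loc(G,c,e_i)$ for $i \in \{1,2\}$. After the first marking step of $e_i$, the induced subgraph is $G|_{V_i}$. Because $G$ is bipartite with bipartition $V_1 \dot\cup V_2$, no edge of $E$ lies inside $V_i$, so $G|_{V_i}$ consists of $|V_i|$ isolated vertices and therefore has $|V_i|$ connected components. After the second marking step the induced subgraph is all of $G$, which contributes $\gamma(G) = 1$ component by the standing connectedness assumption. Hence
\[
\loc(G,c,e_i) = \max\bigl(|V_i|,\, 1\bigr) = |V_i|
\]
(using $|V_i| \geq 1$, which holds whenever $G$ is non-trivial). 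Taking the minimum over the only two sequences gives $\loc(G,c) = \min\{|V_1|, |V_2|\}$, establishing the $\min\{|V_1|, |V_2|\}$-locality.

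For strictness I would argue that both sequences yield locality at least $\min\{|V_1|, |V_2|\}$, so no marking sequence achieves locality $\min\{|V_1|, |V_2|\} - 1$; combined with the upper bound, this yields that $G$ is not $(\min\{|V_1|, |V_2|\} - 1)$-local. I do not expect a substantive obstacle here; the only subtlety is to justify that the $2$-colouring really forces the colour classes to be $V_1$ and $V_2$ (so that the search space over marking sequences has size exactly two), and to invoke connectedness to kill the second-step contribution. Both are immediate from the hypotheses, so the proof is essentially a case distinction on the two possible sequences.
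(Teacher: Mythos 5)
Your proof is correct and takes essentially the same route as the paper's: identify the two colour classes with the bipartition $V_1, V_2$, note there are only two marking sequences, and observe that the one starting with the smaller side witnesses $\min\{|V_1|,|V_2|\}$-locality. You are in fact somewhat more thorough than the paper, which only exhibits the witnessing sequence and leaves the strictness argument (that either sequence produces at least $\min\{|V_1|,|V_2|\}$ components after its first step) implicit.
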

\begin{proof}
	Let $x_1,x_2$ be two colours. A valid 2-colouring of the bipartite graph $G=(V_1\dot{\cup} V_2,E)$ is given by $c : V_1\dot{\cup} V_2 \to \{x_1,x_2\}$ with $c(v) = x_i$ for $v\in V_i$, $i\in[2]$. Let w.l.o.g. $|V_1| \leq |V_2|$ then the marking sequence $e = (x_1,x_2)$ shows the $|V_1|$-locality of $G$.
\end{proof}

\begin{corollary}
	For $d\in\N$, hypercubes $Q_d$ are strictly $2^{d-1}$-local, every knight's graph on $KG = (V_1\dot{\cup}V_2,E)$ and gear graphs $G_d$ are strictly $\min\{|V_1|,|V_2|\}$-local and $d$-crown graphs $(\{x_1, \cdots, x_d, y_1, \ldots, y_d\},\{(x_i,y_j) \mid i,j \in [d], i\neq j\})$ are strictly $d$-local for $d \in \N_{\geq 3}$.
\end{corollary}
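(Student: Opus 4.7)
The plan is to derive all four claims as immediate consequences of the preceding proposition on strict $\min\{|V_1|,|V_2|\}$-locality of $2$-coloured bipartite graphs. Hence, for each graph family I would exhibit a bipartition $V=V_1\dot\cup V_2$, verify that no edge lies inside either part, and read off the sizes of the two classes; the strict bound then follows without further work by colouring $V_1$ and $V_2$ with distinct colours and invoking the proposition.

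For the hypercube $Q_d$, I would use the standard parity-of-Hamming-weight bipartition of $\{0,1\}^d$: every edge of $Q_d$ flips exactly one bit and therefore crosses the partition, and each part has size $2^{d-1}$, giving strict $2^{d-1}$-locality. For the $d$-crown graph, the bipartition $\{x_1,\ldots,x_d\}\dot\cup\{y_1,\ldots,y_d\}$ is already built into the definition, since the edge set $\{\{x_i,y_j\}\mid i\neq j\}$ only contains edges between the two groups and both parts have size $d$, yielding strict $d$-locality.

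For knight's graphs, the usual chessboard $2$-colouring provides a bipartition because any knight move always changes the square colour; the strict locality $\min\{|V_1|,|V_2|\}$ follows by direct application. For the gear graph $G_d$, I would describe it as the wheel $W_d$ with hub $h$ and outer cycle $v_1,\ldots,v_d$ into which a subdivision vertex $u_i$ has been inserted on every cycle edge $\{v_i,v_{i+1}\}$. The bipartition is then $\{v_1,\ldots,v_d\}$ against $\{h,u_1,\ldots,u_d\}$: the hub is adjacent only to the $v_i$, and each subdivision vertex is adjacent only to two cycle vertices, so no edge lies within either side. Applying the proposition delivers the claimed locality.

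The main point to be careful about is the gear-graph case, where one must verify precisely that the hub $h$ is adjacent only to the original cycle vertices and not to the subdivision vertices, so that the bipartition above is valid. Once this small structural check is done, all four statements reduce to a straightforward bookkeeping on the part sizes and an invocation of the preceding proposition.
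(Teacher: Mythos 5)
Your proposal is correct and takes essentially the same route as the paper, which also derives all four claims from the preceding proposition by observing that each class is bipartite (noting for hypercubes that both parts have size $2^{d-1}$). You simply spell out the explicit bipartitions (Hamming-weight parity, chessboard colouring, the crown's built-in parts, and hub-plus-subdivision-vertices versus cycle vertices for the gear graph) in more detail than the paper does.
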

\ifpaper 
\else    
\begin{proof}
	The claim follows from the bipartiteness of all graph classes. For hypercubes note that they have $2^d$ vertices and the colours occur equally often in any $2$-colouring. i.e., their locality is given by $\frac{2^d}{2} = 2^{d-1}$.
\end{proof}

\fi


\begin{corollary}
	For a complete bipartite graph we have $\loc(G,c) = \min\{|V_1|,|V_2|\}$ for any valid colouring $c$.
\end{corollary}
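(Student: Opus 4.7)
The plan is to obtain this as an immediate specialisation of the preceding proposition on bipartite graphs. A complete bipartite graph $G = K_{|V_1|, |V_2|}$ is, in particular, a bipartite graph on the bipartition $(V_1, V_2)$, so whenever $c$ is a valid $2$-colouring of $G$, the preceding proposition applies and yields that $G$ is strictly $\min\{|V_1|, |V_2|\}$-local, i.e., $\loc(G, c) = \min\{|V_1|, |V_2|\}$. Hence the corollary reduces to checking that any valid colouring under consideration fits the hypothesis of the proposition.

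To justify the ``any valid colouring'' phrasing, I would next argue that every valid $2$-colouring of $K_{|V_1|, |V_2|}$ must assign one colour to all of $V_1$ and the other to all of $V_2$. The reason is completeness: since every vertex of $V_1$ is adjacent to every vertex of $V_2$, the colour classes used on the two sides must be disjoint, and with only two colours available, one side has to be monochromatic in one colour and the other side monochromatic in the second. Since $K_{|V_1|, |V_2|}$ is connected (for $|V_1|, |V_2| \geq 1$), the bipartition is unique, so the $2$-colour classes of $c$ necessarily coincide with $(V_1, V_2)$ up to swapping the colour labels, and the locality is the same either way.

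The only real step is this reduction from the complete bipartite setting to the bipartite setting of the preceding proposition; no further marking-sequence argument is needed, since the strictness in the proposition already provides both the upper and the matching lower bound $\min\{|V_1|, |V_2|\}$. Thus the proof is essentially a one-line reference, with the only subtlety being the identification of the colour classes of $c$ with the bipartition $(V_1, V_2)$ of $G$.
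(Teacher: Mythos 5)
Your proposal is correct and is essentially the paper's (implicit) argument: the corollary is stated without proof precisely because it is the specialisation of the preceding proposition on $2$-coloured bipartite graphs, and your added observation that completeness forces every valid $2$-colouring to coincide with the bipartition $(V_1,V_2)$ is the right justification for that specialisation. One caveat: you (reasonably) read ``any valid colouring'' as ``any valid $2$-colouring'', and this restriction is in fact essential rather than cosmetic, since a complete bipartite graph admits valid colourings with more than two colours (e.g., all vertices receiving distinct colours), under which the locality can drop to $1$ while $\min\{|V_1|,|V_2|\}\geq 2$, so the corollary as literally stated only holds under your reading.
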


\begin{corollary}
	For $d\geq 2$, every complete $d$-partite graph  $K_{i_1, \ldots, i_d}$ for $i_1,\ldots, i_d\in \N$ is $\min\{i_1, \ldots, i_d\}$-local where $i_j$ denotes the number of vertices in every disjoint vertex set, $j \in [d]$.
\end{corollary}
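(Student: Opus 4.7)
The plan is to exhibit a single marking sequence that witnesses the claim, using the canonical valid $d$-colouring in which all vertices of part $V_j$ receive a distinct colour $x_j$, for $j\in[d]$. By relabelling I may assume without loss of generality that $i_1=\min\{i_1,\ldots,i_d\}$, so the goal reduces to showing that the sequence $e=(x_1,x_2,\ldots,x_d)$ satisfies $\loc(K_{i_1,\ldots,i_d},c,e)\leq i_1$.

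First I would analyse step $1$ of the marking process: since each part of a complete $d$-partite graph is an independent set, the induced subgraph $G_1=K_{i_1,\ldots,i_d}|_{\{x_1\}}$ consists of exactly $i_1$ isolated vertices, contributing $i_1$ connected components. Next I would argue that at step $2$ the induced subgraph $G_2$ on $V_1\cup V_2$ is precisely the complete bipartite graph $K_{i_1,i_2}$, which is connected as soon as $i_1,i_2\geq 1$; hence the number of components drops to $1$. Finally, for every $j\geq 3$ I would observe that every vertex in $V_j$ is adjacent to every vertex of the already-marked parts $V_1,\ldots,V_{j-1}$, so $G_j$ remains connected and the component count stays at $1$.

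Putting these three observations together, the maximum component count over all steps of $e$ is $\max\{i_1,1,\ldots,1\}=i_1=\min\{i_1,\ldots,i_d\}$, which by \Cref{graphloc1,defgraphloc} yields $\loc(K_{i_1,\ldots,i_d},c)\leq\min\{i_1,\ldots,i_d\}$, establishing $\min\{i_1,\ldots,i_d\}$-locality. I do not expect any real obstacle in this argument: the proof is a direct generalisation of the bipartite corollary, and the only subtlety is the choice to mark the smallest part first, guaranteeing the bound $i_1$ rather than some larger $i_j$. (One could additionally observe that this bound is tight, since any marking sequence begins by exposing an entire independent colour class and thus produces at least $\min_j i_j$ components at step $1$, but this is not needed for the stated corollary.)
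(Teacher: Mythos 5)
Your proposal is correct and follows exactly the route the paper intends: the corollary is stated without its own proof as the natural generalisation of the $2$-coloured bipartite proposition, whose argument is precisely ``mark the smallest part first, after which the graph is connected.'' Your additional observation about tightness (any sequence exposes a full independent colour class at step~$1$) is sound but, as you note, not required since the corollary claims only $\min\{i_1,\ldots,i_d\}$-locality rather than strictness.
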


We now inspect  bipartite graphs that are coloured with more than $2$ colours.

\begin{proposition}
	Let $G=(V_1\dot{\cup} V_2,E)$ be a bipartite graph that is $(>2)$-coloured by a colouring $c$. If there exists a colour $x\in \mathcal{C}(G,c)$ such that for all $v \in V_i$ we have $c(v) = x$ for exactly one $i \in [2]$ and $|V_i| \leq |V_j|$ for $j \in [2] \setminus \{i\}$ then $G$ is strictly $|V_i|$-local. Otherwise $G$ is strictly $|V_j|$-local.
\end{proposition}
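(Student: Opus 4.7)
The approach is to extend the two-colour argument of the previous proposition, exploiting that in a bipartite graph a colour class lying entirely within one side is an independent set whose simultaneous exposure inflates the component count to the size of that side.

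For the first case, assume $V_i$ is monochromatically coloured by some $x$ with $|V_i|\le|V_j|$. For the upper bound I would use the marking sequence $e=(x,y_1,\ldots,y_{|\mathcal C(G,c)|-1})$ that marks $x$ first and the remaining colours afterwards in arbitrary order. Step~$1$ exposes precisely $|V_i|$ isolated vertices (since $V_i$ is an independent set); at every later step each newly exposed vertex lies in $V_j$ and, by the assumed connectedness of $G$, has at least one already marked neighbour in $V_i$, so the number of components is non-increasing after step~$1$ and $\loc(G,c,e)=|V_i|$. For the strict lower bound, I would take an arbitrary marking sequence $e'$ and locate the step $t$ at which $x$ appears. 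When $t=1$ the claim is immediate; when $t>1$, the prefix of length $t-1$ marks only $V_j$-vertices, which are pairwise non-adjacent and contribute one component per marked vertex, while at step $t$ all of $V_i$ is revealed simultaneously; a counting argument using $|V_i|\le|V_j|$ together with the observation that only marked $V_j$-neighbours can merge $V_i$-vertices then forces the maximum count along $e'$ to be at least $|V_i|$.

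In the ``otherwise'' branch no side is monochromatic, so every colour class is a proper subset of one of $V_1,V_2$; the upper bound follows from the marking sequence that first lists the colours restricted to $V_i$ and then those restricted to $V_j$, reaching at most $|V_j|$ components precisely when the final colour of $V_j$ is revealed, while the matching lower bound mirrors the Case~$1$ argument on $V_j$, since no single colour can complete $V_j$ in one step. The main obstacle I anticipate is the strict lower bound on dense bipartite instances, where clever interleavings can route several $V_i$-vertices through a single marked $V_j$-hub and thereby temporarily collapse the component count; I would handle this by a careful case distinction on the marking position of the witnessing colour and a pigeonhole argument on unmarked bipartite neighbourhoods, using in the ``otherwise'' branch the hypothesis that no single colour covers an entire side to prevent the collapse from being completed prematurely.
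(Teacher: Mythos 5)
Your upper bound follows the paper exactly: mark $x$ first so that the independent set $V_i$ appears as $|V_i|$ singletons, then note that every subsequently marked vertex lies in $V_j$ and (by connectedness) has an already-marked neighbour in $V_i$, so the count never grows. The genuine gap is precisely the obstacle you flag yourself and then defer: the strict lower bound in the first branch. Your claim that, when $x$ is marked at step $t>1$, a counting argument "forces the maximum count along $e'$ to be at least $|V_i|$" is false, and no pigeonhole argument on unmarked neighbourhoods can repair it, because the bound itself fails on concrete instances. Take $V_1=\{a_1,a_2,a_3\}$ all coloured $x$, $V_2=\{b_1,b_2,b_3\}$ with $c(b_1)=y$ and $c(b_2)=c(b_3)=z$, and edges $\{a_k,b_1\}$ for $k\in[3]$ together with $\{a_1,b_2\}$ and $\{a_2,b_3\}$. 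This graph is connected, bipartite, validly $3$-coloured, $V_1$ is monochromatic and $|V_1|\le|V_2|$, yet the marking sequence $(y,x,z)$ keeps the component count at $1$ in every step: the hub $b_1$ merges all of $V_1$ the instant $x$ is marked. Hence this graph is strictly $1$-local, not strictly $3$-local. The "temporary collapse" you describe is not temporary: once all of $V_i$ hangs off a marked hub, every remaining $V_j$-vertex attaches to a marked $V_i$-neighbour and the count stays at one forever.

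For comparison, the paper's own proof stumbles at the identical spot: it asserts that when $x$ is marked mid-sequence the number of components "is given by $\max\{|V_1|,|S|\}$" (with $S$ the set of previously marked $V_2$-vertices), which is exactly the claim the hub example refutes (there the count is $1$, not $\max\{3,1\}$). So your instinct about where the difficulty sits is right, but what is missing is not a cleverer counting lemma; the lower bound of the proposition as stated does not hold, and any honest proof would need an extra hypothesis (for instance, that no $V_j$-vertex marked before $x$ has more than one neighbour in $V_i$) or a weakened conclusion. The same caution applies to your treatment of the "otherwise" branch, where the argument is sketched at a similar level of detail and is vulnerable to the same hub phenomenon.
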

\begin{proof}
	Let w.l.o.g. $V_1$ the set of nodes such that there exists a colour $x$ with $c(v) = x$ for all $v \in V_1$ and $|V_1| \leq |V_2|$. Then for any marking sequence that starts with $x$ the graph is $|V_1|$-local. Further, any marking sequence $\sigma'$ that does not start with $c$ but $c'$ there exists a set of nodes $V_{c'}$ such that there are $|V_{c'}|$ connected components. Suppose that $x$ is the last colour of the marking sequence $\sigma'$. Then the number of connected components after $|\sigma'| -1$ marking steps is $|V_2|$, but $|V_2| \geq |V_1|$ which implies that the locality of $G$ w.r.t. $\sigma'$ is larger than the one w.r.t. $\sigma$, i.e., $G$ is $|V_1|$-local. In the case of a marking sequence $\sigma''$ such that there exists $i \in [2, \sigma -1]$ with $\sigma[i] = x$, i.e., we mark the $x$-coloured nodes somewhen in the middle, the number of connected components is given by $\max\{|V_1|, |S|\}$ where $S\subseteq V_2$ is given by $\bigcup_{j \in [i-1]} \{v \in V_2 \mid c(v) = x_j\}$. If $|V_1| \geq |S|$ then the locality of $G$ is given by $|V_1|$. Otherwise, the marking sequence $\sigma'$ is not an optimal one. 
	
	In the case that $|V_1| < |V_2|$ a marking sequence that firstly marks all nodes of $V_2$ results in a locality of $|V_2|$.
\end{proof}

The previous results raise the hope that for all bipartite graphs and all colourings, we can determine the locality efficiently.
\Cref{biparbitrary} shows that this is not the case.

\begin{proposition}\label{tildewconstruction}
	Let $\tilde{\Sigma} = \{\widetilde{\ta}_i \mid \ta_i \in \Sigma\}$. For all condensed $w \in \Sigma^*$ there exists $\tilde{w} \in (\Sigma \cup \tilde{\Sigma})^*$ such that for all $x \in \letters(w)$ we have either $\{i \mid w[i] = x\} \subset 2\mathbb{N}$ or $\{i \mid w[i] = x\} \subset 2\mathbb{N}+1$. Further, $w$ is strictly $k$-local iff $\tilde{w}$ is strictly $k$-local.
\end{proposition}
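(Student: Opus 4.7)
The plan is to build $\tilde{w}$ from $w$ by inserting tilde letters between certain consecutive positions of $w$, and to show that the insertions simultaneously yield the parity property on $\tilde{w}$ and preserve the locality. Traversing $w$ from left to right, I maintain for each $x \in \letters(w)$ a target parity $p(x) \in \{0,1\}$, namely that of $x$'s first occurrence in the word under construction. When appending $w[i]$ would place it at a position whose parity differs from $p(w[i])$, I first insert a letter from $\tilde\Sigma$ (distinct from every letter used so far) and then append $w[i]$; the insertion flips the parity of all subsequent positions and restores the match. By induction, every original letter $x$ lies in $\tilde{w}$ only at positions of parity $p(x)$, and every inserted tilde letter, being unique, trivially satisfies the parity condition. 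Condensation of $\tilde{w}$ holds because each inserted tilde letter differs from both of its neighbours.

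For the locality equality, I proceed by induction on the number of insertions using the following single-insertion lemma: subdividing an edge $\{v_k, v_{k+1}\}$ of a linear graph by a new vertex $v_*$ whose colour $\tilde y$ does not occur elsewhere preserves the locality. For $\loc(\tilde w') \le \loc(w')$, take an optimal marking sequence $(x_1, \dots, x_m)$ for $w'$, let $t$ be the smallest index at which one of the colours of $v_k$ or $v_{k+1}$ occurs in $\{x_1, \dots, x_t\}$, and form the sequence $(x_1, \dots, x_t, \tilde y, x_{t+1}, \dots, x_m)$ for $\tilde w'$. A short case analysis on which of $v_k, v_{k+1}, v_*$ are marked at each step confirms that the component count of $\tilde w'$ equals that of the corresponding step of $w'$ throughout. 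The reverse inequality follows by deleting $\tilde y$ from any optimal sequence for $\tilde w'$: removing $v_*$ either merges the two components that were bridged through $v_*$ into a single one via the restored direct edge $\{v_k, v_{k+1}\}$ of $w'$ or eliminates a singleton, so the component count never increases. Iterating over insertions gives $\loc(w) = \loc(\tilde w)$; since strictly $k$-local means exactly $\loc = k$, the biconditional follows.

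The main obstacle is pinning down the timing of $\tilde y$ in the single-insertion lemma: $\tilde y$ must be marked precisely between the steps at which the first and the second of $v_k, v_{k+1}$ become marked in the optimal sequence for $w'$. If $\tilde y$ is marked earlier, $v_*$ becomes an isolated singleton; if later, the step at which $v_k$ and $v_{k+1}$ are both marked but still separated by an unmarked $v_*$ inflates the component count by one. The four-case local analysis on the marked status of the neighbours of $v_*$, together with the observation that in a condensed word $v_k$ and $v_{k+1}$ carry distinct colours and hence cannot be marked in the same step, makes this timing choice well-defined and constitutes the technical crux of the argument.
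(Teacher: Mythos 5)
Your construction of $\tilde w$ (greedily insert at most one shifting letter between consecutive positions so as to align parities) coincides with the paper's map $f_n$, and your edge-subdivision lemma is a clean way to transfer locality --- \emph{provided} the subdivision vertex carries a colour occurring nowhere else. That proviso is where your proof departs from the statement. The proposition fixes $\tilde{\Sigma}$ to be a single copy of $\Sigma$, so only $|\Sigma|$ tilde letters are available, whereas your requirement that each inserted letter be ``distinct from every letter used so far'' can demand up to $|w|-1$ fresh letters: for $w=(\ta\tb\tc)^m$ the greedy procedure inserts one letter per period, so already for $m\geq 5$ you run out of the three letters of $\tilde{\Sigma}$. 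Once a tilde colour must recur, your single-insertion lemma no longer applies: marking that colour marks all of its subdivision vertices simultaneously, and any occurrence whose two neighbours are still unmarked becomes a new singleton component, so the timing argument you correctly identify as the crux cannot be satisfied for all occurrences at once.

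The paper closes exactly this gap. Its construction always places $\widetilde{w[j]}$ immediately to the left of $w[j]$, so every occurrence of $\tilde{\ta}$ is adjacent to an occurrence of $\ta$; the intermediate parity claim $(*)$ then shows that, for the previously marked letter $\tb$, the configurations $\tilde{\ta}\ta\tb$ and $\tb\tilde{\ta}\ta$ exclude each other, so $\tilde{\ta}$ can be marked in lock-step with $\ta$ (uniformly immediately before or immediately after, over all occurrences) without ever creating an extra block; the reverse direction is handled by projecting a marking sequence of $\tilde w$ onto $\Sigma$. To repair your argument you must either reproduce a lock-step argument of this kind, or weaken the statement to allow an auxiliary alphabet of size $O(|w|)$. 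The latter would in fact still suffice for the intended application in Theorem~\ref{biparbitrary}, since the reduction stays polynomial and the resulting word is still condensed with parity-separated letters, but it does not prove the proposition as stated.
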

\ifpaper 
\else    
\begin{proof}
	Let $\Sigma = \{\ta_1, \ldots, \ta_\sigma\}$. Define a mapping 
	\[
	f_n : \{\ta_1, \ldots, \ta_\sigma\}^n \to (\{\ta_1, \ldots, \ta_\sigma\} \cup \{\widetilde{\ta_1}, \ldots, \widetilde{\ta_\sigma}\})^*
	\]
	for $n \in \mathbb N$ by
	\[
	w = w[1] \cdots w[n] \mapsto f_n (w) = \tilde{w} = w[1] \widetilde{w[2]}^{i_1} w[2] \widetilde{w[3]}^{i_2} w[3] \widetilde{w[4]}^{i_3} \cdots \widetilde{w[n]}^{i_{n-1}} w[n] 
	\]
	with $i_1, \ldots, i_{n-1} \in \{0,1\}$ such that for all $x \in \{\ta_1, \ldots, \ta_\sigma\} \cup \{\widetilde{\ta_1}, \ldots, \widetilde{\ta_\sigma}\}$ we have either $\{i \in [n] \mid w[i] = x\} \subset 2 \mathbb N$ or $\{i \in [n] \mid w[i] = x\} \subset 2 \mathbb N +1$. In the following we will refer to letters from $\{\widetilde{\ta_1}, \ldots, \widetilde{\ta_\sigma}\}$ as \emph{shifting letters}.
	
	Note that $f_n$ is well-defined since every letter from $\{\ta_1, \ldots, \ta_\sigma\}$ is either shifted by one to an even or odd position, resp. Let $\parity(\ta)\in \{\even, \odd\}$ denote the parity of the leftmost occurrence of the letter $\ta \in \Sigma$ in $\tilde{w}$. 
	
	\medskip
	
	Before continuing with proving that $w$ is strictly $k$-local iff $\tilde w = f_{|w|}(w)$ is strictly $k$-local,  we show the follwing intermediate result: Let $\ta\tb \in \Fact_2(w)$ and $\tb\ta\in\Fact_2(w)$ for $\ta,\tb \in \Sigma$ different (this assumption is in our setting w.l.o.g. since $w$ is condensed and the claim holds trivially for words of length $1$). If $\tilde{\ta} \in \al(\tilde{w})$ then for $\tilde{w}$ we either have that $\tilde{\ta}\ta\tb \in \Fact_3(w)$ or $\tb \tilde{\ta}\ta \in \Fact_3(w)$ holds $(*)$.
	For the sake of contradiction, suppose that both $\tilde{\ta}\ta\tb \in \Fact_3(w)$ and $\tb \tilde{\ta}\ta \in \Fact_3(w)$. W.l.o.g. assume that $\parity(\tb) = \even$. With $\tilde{\ta}\ta\tb \in \Fact_3(w)$ we conclude that $\parity(\ta) = \odd$ and by $\tb \tilde{\ta}\ta \in \Fact_3(w)$ we get $\parity(\ta) = \even$, a contradiction. 
	
	\medskip
	
	We now want to show that $w$ is strictly $k$-local iff $\tilde w = f_{|w|}(w)$ is strictly $k$-local. 
	For $\tilde w$ we construct a marking sequence as in Algorithm~\ref{alg:computing_tildesigma}. The main idea is to keep the order of the letters from $\{\ta_1, \ldots, \ta_\sigma\}$ in the marking sequence $\tilde{e}$ identical to $e$ and add shifting letters in between such that the number of blocks in $w$ and $\tilde{w}$ remains identical. Therefore, we always mark a letter $\ta \in \{\ta_1,\ldots, \ta_\sigma\}$ right before or after the marking of $\tilde{a}$. To determine the order of marking $\ta$ and $\tilde{\ta}$ we follow four cases:
	\begin{description}
		\item[Case 1:] If the letter $\ta$ is marked in the next step in $e$ and $\tilde{\ta}$ does not occur in $\tilde{w}$ we continue by marking $\ta$.
		\item[Case 2:] By $(*)$ we have that $\tilde{\ta}\ta\tb \in \Fact_3(\tilde{w})$ and $\tb\tilde{\ta}\ta \in \Fact_3(\tilde{w})$ exclude each other. If $\tb$ is marked in $e$ right before $\ta$ and $\tb\tilde{\ta}\ta \in \Fact_3(\tilde{w})$ holds then we continue $\tilde{e}$ by marking $\tilde{\ta}$ first and then $\ta$.
		\item[Case 3:] This case is dual to Case 2. If $\tb$ is marked in $e$ right before $\ta$ and $\tilde{\ta}\ta \tb \in \Fact_3(\tilde{w})$ holds then we continue $\tilde{e}$ by marking $\ta$ first and then $\tilde{\ta}$.
		\item[Case 4:] If none of the above cases applies, we can conclude that a letter in the marking sequence is chosen that is not in direct neighbourhood of a block. In this case, we obtain the same number of blocks by marking this letter in $\tilde{e}$, too and continue with its shifting letter (if this one exists).
	\end{description}
	\begin{algorithm}
		\caption{Compute $\tilde{e}$}\label{alg:computing_tildesigma}
		\Input{letter-square-free $w \in \Sigma^n$ and marking sequence $e$ which verifies that $w$ is strictly $k$-local for $k \in \N$}
		\Output{marking sequence $\tilde{e}$}
		\nonl\hrulefill
		
		$\tilde{w} :=  f_n(w)$\;
		$\tilde{e}[1] := e[1]$\;
		$j := 1$ \tcp*{counter $\text{for}$ the length of $\tilde{e}$}
		\tcp{mark $\tilde{a}$ $\text{and}$ $a$ always together}
		\If{$\tilde{a} \in \al(\tilde{w})$}{
			$j++$\;
			$\tilde{e}[j] := \tilde{a}$\;}
		\tcp{setting the next values of $\tilde{e}$ by iterating through $e$ and distinct on the relative positions of marked letters $a$ and their shifting letter $\tilde{a}$}
		\For{$i\in[2,|e|]$}
			{\tcp{Case 1}
			\If{$e[i] = a$ and $\tilde{a} \notin \al(\tilde{w})$}{
				$j++$\;
				$\tilde{e}[j] := a$\;}
			\tcp{Case 2}
			\ElseIf{
			$e[i] = a$ and $\tilde{a} \in \al(\tilde{w})$ and $\bar{\tilde{a}} \notin \al(\tilde{w}_j)$ and $e[i-1] \tilde{a} a \in \Fact_2(\tilde{w})$}{
				$j++$\;
				$\tilde{e}[j] := \tilde{a}$\;
				\tcp{mark $\tilde{a}$ $\text{and}$ $a$ always together}
				\If{$\bar{a} \notin \al(\tilde{w}_j)$}{
					$j++$\;
					$\tilde{e}[j] := a$\;}
			}
			\tcp{Case 3}
			\ElseIf{
			$e[i] = a$ and $\tilde{a} \in \al(\tilde{w})$ and $\bar{\tilde{a}} \notin \al(\tilde{w}_j)$ and $\tilde{a} a e[i-1] \in \Fact_2(\tilde{w})$}{
				$j++$\;
				$\tilde{e}[j] := a$\;
				\tcp{mark $\tilde{a}$ $\text{and}$ $a$ always together}
				\If{$\bar{a} \notin \al(\tilde{w}_j)$}{
					$j++$\;
					$\tilde{e}[j] := a$\;}
			}
			\tcp{Case 4}
			\Else{
				$j++$\;
				$\tilde{e}[j] := a$\;
				\tcp{mark $\tilde{a}$ $\text{and}$ $a$ always together}
				\If{$\bar{a} \notin \al(\tilde{w}_j)$}
					{$j++$\;
					$\tilde{e}[j] := a$\;}
			}
		}
		\Return $\tilde{e}$
\end{algorithm}
	Notice that Algorithm~\ref{alg:computing_tildesigma} terminates since the number of iterations is bounded by $|e|$. Moreover, the constructed marking sequence can be applied on $\tilde{w}$ since every time a letter is marked, its shifting letter is marked, too or v.v., i.e., $\tilde{e}$ does contain all the letters of $\tilde w$.
	
	For the correctness, we continue by showing that $w$ is strictly $k$-local iff $\tilde{w} := f_{|w|}(w)$ is strictly $k$-local.
	
	For the first direction let $w$ be strictly $k$-local. Thus, there exists a marking sequence $e$ that verifies the $k$-locality of $w$. We now want to show that $\tilde{w}$ is strictly $k$-local. First, we show $\tilde{w}$'s $k$-locality w.r.t. $\tilde{e}$ constructed via Algorithm~\ref{alg:computing_tildesigma} from $e$. In fact, we use induction on the marking steps in $e$ in order to show that $\tilde{w}$ does not contain more blocks than $w$ with the same amount of marked letters from $\{\ta_1, \ldots, \ta_\sigma\}$. As a second step, we continue with the strictness.
	
	For the base case, consider the first marking step $i= 1$. Since we have $\tilde{e}[1] = e[1]$ the number of blocks is identical for $w_1$ and $\tilde{w}[1]$. Furthermore, all occurrences of $\widetilde{e[1]}$ are placed left to $e[1]$'s in $\tilde{w}$ by construction. Thus, the number of blocks in $\tilde{w}$ does not increase in the second marking step with $\tilde{e}[1,2] = (e[1], \widetilde{e[1]})$ if $\widetilde{e[1]}\in \al(\tilde{w})$.
	
	For the inductive step, we want to show that the \nth{$i$} marking step of $e$ in $w$ does not result in more than $k$ blocks in $\tilde{w}$ w.r.t. $\tilde{e}$ under the assumption that for all marking steps $i' < i$ of $e$ in $w$, $\tilde{w}$ has at most $k$ blocks w.r.t. $\tilde{e}$.
	Let $i \in [|e|]$ be an arbitrary marking step. By the definition of $k$-locality $w_j$ does not contain more than $k$ blocks. We now distinct on the relative positions of $e[i]$ and marked letters.
	\begin{description}
		\item[Case 1:] If at least one occurrence of $e[i]$ in $w$ is neighboured to an already marked letter in $w_{i-1}$, then we can conclude that either Case 1, 2 or 3 of Algorithm~\ref{alg:computing_tildesigma} apply. In Case 1, $\tilde{w}$ does not contain the shifting letter for $e[i]$, i.e., the number of blocks in $w$ and $\tilde{w}$ is identical using the inductive hypothesis. In both, Case 2 and 3, $e[i]$ and $\widetilde{e[i]}$ are marked in the resp. order. Since $(*)$ shows that those cases are mutually exclusive and every occurrence of $e[i]$ in $w$ that does not occur next to a marked letter opens a new block in $w$ and by the inductive hypothesis, it follows that $\tilde{w}$ has the same number of blocks after marking $\al(e[1,i])$ and $\{\widetilde{e[1]}, \ldots, \widetilde{e[i]}\}$.
		\item[Case 2:] If none of the occurrences of $e[i]$ in $w$ is neighboured to a marked letter in $w_{i-1}$ then either Case 1 or 4 of Algorithm~\ref{alg:computing_tildesigma} apply. In Case 1, $\tilde{w}$ does not contain the shifting letter for $e[i]$, i.e., the number of blocks in $w$ and $\tilde{w}$ is identical. In Case 4, $e[i]$ is a letter that occurs in $\tilde{w}$ also in its shifted form and is not neighboured by a marked block. Thus, the marking of $e[i]$ in $\tilde{w}$ results in the same number of blocks for $w$ and $\tilde{w}$ using the inductive hypothesis. By the same argument as given in the inductive base, the number of blocks does not increase when marking $\widetilde{e[i]}$ in the following step.
	\end{description}
	In total, this shows that $\tilde{w}$ contains at most $k$ blocks when being marked with $\tilde{e}$ from Algorithm~\ref{alg:computing_tildesigma}, i.e., the $k$-locality of $\tilde{w}$.
	
	Now, we proceed by showing that $\tilde{w}$ is strictly $k$-local. For the sake of contradiciton, suppose that $\tilde{w}$ is not strictly $k$-local. Since we have shown its $k$-locality in the previous step this means that there exists a $k' <k$ such that $\tilde{w}$ is $k'$-local and a respective marking sequence $\tilde{w}'$. By only considering letters from $\{\ta_1,\ldots, \ta_\sigma\}$ we obtain the marking sequence $\pi_{\{\ta_1, \ldots, \ta_\sigma\}}(\tilde{e}')$ that marks $w$ with at most $k'$ blocks per marking step, a contradiction to $w$ being strictly $k$-local. 
	
	\medskip
	
	For the second direction, let $\tilde{w}$ be strictly $k$-local. Suppose that $w$ is not strictly $k$-local. In the case that $w$ is $k''$-local for a $k'' < k$ we can conclude that $\tilde{w}$ is $k''$-local two using the first direction. This contradicts the fact that $\tilde{w}$ is $k$-local. Thus, there exists a $k' > k$ such that $w$ is $k'$-local. Let $\tilde{e}$ be any marking sequence verifying that $\tilde{w}$ is $k$-local. Using their projection $\pi_{\{\ta_1, \ldots, \ta_\sigma\}}$, i.e., deleting all letters but $\{\ta_1,\ldots,\ta_{\sigma}\}$, for a marking of $w$ we can conclude that $w$ is $k$-local since all letters from $\{{\ta_1}, \ldots, {\ta_\sigma}\}$ are located next to their shifting letter, a contradiction. 
	
	In total, we have shown the existence of a word with the same locality and the property that every letter does purely occur on even or odd positions for every letter-square-free word.
\end{proof}

\fi

\begin{theorem}\label{biparbitrary}
	Let $G$ be a bipartite graph that is coloured by $c$ with more than $\ell>2$ colours. Then $(G,c,k)\in\Loc_G$ is $\NP$-complete.
\end{theorem}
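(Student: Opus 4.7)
The plan is to reduce from the NP-complete problem $\Loc_w$ of word locality, using Proposition~\ref{tildewconstruction} as the main combinatorial tool. Recall that it provides, for every condensed word $w$, a word $\tilde{w}$ over an enlarged alphabet in which each letter occupies only even or only odd positions, and such that $w$ is strictly $k$-local iff $\tilde{w}$ is. Interpreting $\tilde w$ as a coloured path graph then produces the desired bipartite instance, because the parity constraint guarantees that the letter-induced colouring is proper.

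Concretely, given an instance $(w,k)$ of $\Loc_w$, first condense $w$. Without loss of generality assume $|\alphabet(w)| \geq 3$, since for unary or binary words the locality is computable in polynomial time and gives no hardness. Apply Proposition~\ref{tildewconstruction} in polynomial time to produce $\tilde w \in (\Sigma\cup\tilde\Sigma)^*$ with $|\alphabet(\tilde w)| \geq |\alphabet(w)| > 2$. Build the path graph $G=(V,E)$ on $n = |\tilde w|$ vertices $v_1,\dots,v_n$ with $E=\{\{v_i,v_{i+1}\}\mid i\in[n-1]\}$, and colour $c(v_i)=\tilde w[i]$. The construction is polynomial.

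Next I would check the hypotheses of the theorem. $G$ is a path, hence bipartite with the bipartition into vertices of even and odd index. By the key property of $\tilde w$, every colour class of $c$ lies entirely within one side of this bipartition, so $c$ is a valid colouring of $G$, using $\ell>2$ colours. Finally I would verify the chain of equalities $\loc(G,c)=\loc(\tilde w)=\loc(w)$. The second equality is Proposition~\ref{tildewconstruction}. For the first, I identify marking sequences on $G$ with marking sequences on $\tilde w$ via the letter-to-colour correspondence; since $G$ is a path, for each $i$ the induced subgraph $G_i$ is a disjoint union of maximal sub-paths, and these sub-paths correspond bijectively to the blocks of $\tilde w$ formed by the first $i$ colours in the sequence. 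Hence $\loc(G,c,e)$ equals the maximum block count of $\tilde w$ over the same enumeration, and taking the minimum over $e$ gives the claimed equality. Therefore $(G,c,k)\in\Loc_G$ iff $(w,k)\in\Loc_w$, establishing NP-hardness.

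NP-membership is immediate by the same argument as Proposition~\ref{membership}: a candidate marking sequence $e$ can be verified by computing the connected components of the at most $\ell$ induced subgraphs $G_1,\dots,G_\ell$ in polynomial time. The only delicate step is the bijection between components of $G_i$ and blocks of $\tilde w$, and this is essentially a bookkeeping exercise exploiting that any subgraph of a path is itself a disjoint union of paths; once that correspondence is made precise, the rest of the reduction is routine.
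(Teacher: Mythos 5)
Your proposal is correct and follows essentially the same route as the paper: both reduce from $\Loc_w$ via $\cond$ and the parity-shifting construction of Proposition~\ref{tildewconstruction}, then read $\tilde w$ as a validly coloured path (hence bipartite) graph whose component counts match the block counts of the word. Your write-up is somewhat more explicit about the path construction and the component-block bijection, which the paper leaves implicit by appealing to Proposition~\ref{completeness}, but there is no substantive difference in approach.
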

\begin{proof}
	First of all, the $\NP$-membership follows directly from the NP-membership for arbitrary graphs. To show the problems $\NP$-hardness, we reduce from the problem $\Loc_w$ that was used for Proposition~\ref{completeness} before. Let $w' \in \Sigma^*$ and $k \in \N$ such that $(w',k) \in \Loc_w$. By  Proposition~\ref{tildewconstruction} and its proof we can conclude that for $w = \cond(w')$ and $\tilde{w} = f_{|w|}(w)$ with $f$ from the proof of Proposition~\ref{tildewconstruction} we have 
	\[
	(w',k) \in \Loc_w \iff (w,k) \in \Loc_w \iff (\tilde{w},k) \in \Loc_w.
	\]
	Since all modifications are polynomial-time computable and $\tilde{w}$ a linear representation a bipartite graph coloured with $\al(\tilde{w})$ colours the $\NP$-completeness follows by Proposition~\ref{completeness}.
\end{proof}

Now, we investigate snarks, i.e., connected bridgeless cubic graphs with a minimal number of colours  for colouring edges of $4$ (cf. Gardner \cite{snarks}). In 1880, Tait showed that the famous four-colour theorem holds true iff every snark is non-planar \cite{tait}. One of the most famous snarks is the Peterson graph.

\begin{remark}
	Note that the Peterson graph is $3$-local since it is $3$-colourable and there exists a colour $x$ that occurs exactly $3$ times for every $3$-colouring.
\end{remark}

\ifpaper 
	\begin{figure}
	\centering
	\begin{tikzpicture}[scale=0.9]
		\node[shape=circle,draw=black,fill=cyan] (u11) at (0,0) {\tiny$u_1^1$};
		\node[shape=circle,draw=black,fill=red] (u21) at (0.7,-0.7) {\tiny$u_2^1$};
		\node[shape=circle,draw=black,fill=yellow] (u31) at (0.5,-1.6) {\tiny$u_3^1$};
		\node[shape=circle,draw=black,fill=cyan] (u41) at (-0.5,-1.6) {\tiny$u_4^1$};
		\node[shape=circle,draw=black,fill=red] (u51) at (-0.7,-0.7) {\tiny$u_5^1$};
		
		\node[shape=circle,draw=black,fill=red] (v1) at (0,1) {\tiny$v_1$} ;
		\node[shape=circle,draw=black,fill=yellow] (v2) at (1.7,-0.4) {\tiny$v_2$} ;
		\node[shape=circle,draw=black,fill=red] (v3) at (1,-2.4) {\tiny$v_3$} ;
		\node[shape=circle,draw=black,fill=red] (v4) at (-1,-2.4) {\tiny$v_4$} ;
		\node[shape=circle,draw=black,fill=yellow] (v5) at (-1.6,-0.4) {\tiny$v_5$} ;
		
		\node[shape=circle,draw=black,fill=cyan] (u12) at (-0.6,1.6) {\tiny$u_1^2$};
		\node[shape=circle,draw=black,fill=cyan] (u13) at (0.6,1.6) {\tiny$u_1^3$};
		\node[shape=circle,draw=black,fill=red] (u22) at (2.3,0.2) {\tiny$u_2^2$};
		\node[shape=circle,draw=black,fill=red] (u23) at (2.5,-0.7) {\tiny$u_2^3$};
		\node[shape=circle,draw=black,fill=yellow] (u32) at (1.8,-2.8) {\tiny$u_3^2$};
		\node[shape=circle,draw=black,fill=yellow] (u33) at (0.9,-3.2) {\tiny$u_3^3$};
		\node[shape=circle,draw=black,fill=cyan] (u42) at (-1.1,-3.2) {\tiny$u_4^2$};
		\node[shape=circle,draw=black,fill=cyan] (u43) at (-1.8,-2.5) {\tiny$u_4^3$};
		\node[shape=circle,draw=black,fill=red] (u52) at (-2.5,-0.7) {\tiny$u_5^2$};
		\node[shape=circle,draw=black,fill=red] (u53) at (-2.2,0.2) {\tiny$u_5^3$};
		
		\path [-] (u11) edge node[left] {} (u21);
		\path [-](u21) edge node[left] {} (u31);
		\path [-](u31) edge node[left] {} (u41);
		\path [-](u41) edge node[left] {} (u51);
		\path [-](u51) edge node[left] {} (u11);
		\path [-](u11) edge node[left] {} (v1);
		\path [-](u21) edge node[left] {} (v2);
		\path [-](u31) edge node[left] {} (v3);
		\path [-](u41) edge node[right] {} (v4);
		\path [-](u51) edge node[right] {} (v5);   
		\path [-](v1) edge node[right] {} (u12);  
		\path [-](v1) edge node[right] {} (u13);   
		\path [-](v2) edge node[right] {} (u22);  
		\path [-](v2) edge node[right] {} (u23);   
		\path [-](v3) edge node[right] {} (u32);  
		\path [-](v3) edge node[right] {} (u33);   
		\path [-](v4) edge node[right] {} (u42);  
		\path [-](v4) edge node[right] {} (u43);   
		\path [-](v5) edge node[right] {} (u52);  
		\path [-](v5) edge node[right] {} (u53);   
		
		\path [-](u12) edge [bend left=80] node[] {} (u22);
		\path [-](u22) edge [bend left=80] node {} (u32);
		\path [-](u32) edge [bend left=80] node[right] {} (u42);
		\path [-](u42) edge [bend left=80] node[right] {} (u52);
		\path [-](u52) edge [bend left=80] node[right] {} (u13);
		\path [-](u13) edge [bend left=80] node[right] {} (u23);
		\path [-](u23) edge [bend left=80] node[right] {} (u33);
		\path [-](u33) edge [bend left=80] node[right] {} (u43);
		\path [-](u43) edge [bend left=80] node[right] {} (u53);
		\path [-](u12) edge node[right] {} (u53);   
	\end{tikzpicture}
	\caption{Flower snark $J_5$ with a $3$-colouring.}
	\label{flowersnarks}
\end{figure}
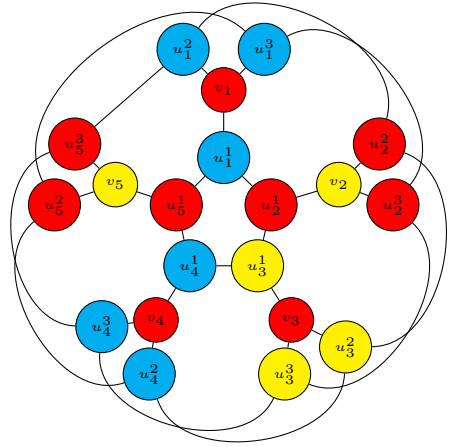

\else    

\fi

\begin{remark}
	Note that the chromatic number of flower snarks is $3$. See Figure~\ref{flowersnarks} for en exemplary $3$-colouring of $J_5$.
\end{remark}

\begin{proposition}
	For every flower snark $J_d$, $d\in 2\N +1$ there exists a $\chi(J_d)$-colouring such that $J_d$ is $(d - (d \operatorname{mod} 3))$-local.
\end{proposition}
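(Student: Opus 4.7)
The plan is constructive: exhibit an explicit $3$-colouring $c$ of $J_d$ together with a marking sequence that witnesses the bound. Write $d = 3q + r$ with $r = d \bmod 3 \in \{0,1,2\}$, so the target is $3q$. Since $J_d$ is connected, any marking sequence finishes with one component; the task is therefore to control the intermediate marking steps.

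First, I would use the structure of $J_d$ to pin down what a valid $3$-colouring must look like. Each ``spoke'' consists of the hub $v_i$ adjacent to the three ``rays'' $u_i^1, u_i^2, u_i^3$, so $v_i$ uses one colour (say $x_1$) and $u_i^1,u_i^2,u_i^3$ together use only the other two colours $\{x_2,x_3\}$. In addition, the inner cycle on the $u_i^1$'s and the twisted outer cycle of length $2d$ on the $u_i^2$'s and $u_i^3$'s must be properly $2$- or $3$-coloured; because $d$ is odd, a purely alternating pattern is impossible on either cycle, so a third colour is forced in a controlled number of positions.

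Next, I would define $c$ with $3$-fold symmetry as far as possible: assign the colours of $u_i^1, v_i, u_i^2, u_i^3$ according to $i \bmod 3$, and patch the ``boundary'' positions $i \in \{d-r+1,\dots,d\}$ by a small number of local swaps that preserve validity on both cycles. The marking sequence is $e = (x_1, x_2, x_3)$, where $x_1$ is chosen so that the induced subgraph $G|_{V_{x_1}}$ on the $x_1$-coloured vertices breaks into at most $3q$ connected components: concretely, pick $x_1$ so that each block of three consecutive spokes $\{3k-2, 3k-1, 3k\}$ contributes a single connected piece through the outer cycle, yielding $q$ such pieces, and show that the at most $r$ ``leftover'' spokes contribute altogether at most $2q$ extra singletons, for a total of at most $3q$. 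The second marking step cannot increase the count: by construction every $x_2$-vertex is adjacent to a vertex already marked in colour $x_1$, hence only attaches to or merges existing components. The third step collapses the picture to $\gamma(J_d)=1$.

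The main obstacle is aligning the $3$-periodic pattern with the \emph{odd} parameter $d$ and with the twist that makes $J_d$ a snark: the twist shifts the relationship between the $u^2$- and $u^3$-patterns, so the naive $\mathbb{Z}_3$-rotational colouring has to be tweaked at both the twist and the modular boundary. Consequently the three cases $r \in \{0,1,2\}$ must be handled with slightly different colourings and starting colours, and in each case the verification reduces to a direct inspection of the induced subgraphs $G_1$ and $G_2$ and a careful count of their connected components against the bound $3q$.
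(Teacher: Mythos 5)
Your overall skeleton matches the paper's proof: an explicit $3$-colouring with period-$3$ structure along the spokes, the marking sequence consisting of the three colours in a suitable order, and a case analysis on $r = d \bmod 3$. However, the central counting step for the first marking stage rests on a false premise. You propose to choose $x_1$ so that ``each block of three consecutive spokes contributes a single connected piece through the outer cycle, yielding $q$ such pieces,'' plus some leftover singletons. But a $\chi(J_d)$-colouring is a proper colouring, so every colour class is an independent set; the graph $G_1 = G|_{\{x_1\}}$ induced by the first colour is therefore edgeless and has exactly $|V_{x_1}|$ singleton components --- no vertices of a single colour class can be ``connected through the outer cycle.'' Your decomposition into $q$ connected pieces plus at most $2q$ extra singletons is thus not realizable, and the claimed total of $3q$ does not follow from the argument as stated. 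The correct accounting (which is what the paper does) is to arrange the colouring so that the first colour is used on exactly $3\,(d \operatorname{div} 3)$ vertices, namely the three rays $u_i^1,u_i^2,u_i^3$ of every third spoke; these are pairwise non-adjacent, giving exactly $d - (d \bmod 3)$ singletons at step one, which is already the claimed locality.

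A second, smaller issue: you assert that ``by construction every $x_2$-vertex is adjacent to a vertex already marked in colour $x_1$,'' but you never exhibit a colouring with this property, and it fails for the natural period-$3$ colouring (in the paper's colouring, the hubs $v_i$ with $i \equiv 0 \pmod 3$ receive the second colour yet all their neighbours carry the third colour, so they open fresh singletons at step two). This does not by itself break the bound --- the paper shows the second step yields only $2\,(d \operatorname{div} 3)$ components, safely below $3\,(d \operatorname{div} 3)$ --- but it means your verification of the second step also needs to be replaced by an actual count of the components of $G_2$ in each of the three cases $r \in \{0,1,2\}$, as the paper does.
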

\ifpaper 
\else    
\begin{proof}
	First, we give a $(\chi(J_d) = 3)$-colouring for $J_d$. Define the colouring function $c: V \mapsto \{r,g,b\}$ for $r,g,b \in \N$ distinct as follows:
	\begin{align*}
		& v \mapsto c(v) \\
		&= \begin{cases}
			b, & \text{ if } v \in \{u_i^1,u_i^2,u_i^3 \mid i \in \{3d+1 \mid d \in [(d \operatorname{div} 3)-1]_0\}\},\\
			r, & \text{ if } v \in \{v_i \mid i \in \{3d+1 \mid d \in [(d \operatorname{div} 3)-1]_0\}\},\\
			r, & \text{ if } v \in \{u_i^1,u_i^2,u_i^3 \mid i \in \{3d+2 \mid d \in [(d \operatorname{div} 3)-1]_0\}\},\\
			g, & \text{ if } v \in \{v_i \mid i \in \{3d+2 \mid d \in [(d \operatorname{div} 3)-1]_0\}\},\\
			g, & \text{ if } v \in \{u_i^1,u_i^2,u_i^3 \mid i \in \{3d+3 \mid d \in [(d \operatorname{div} 3)-1]_0\}\},\\
			r, & \text{ if } v \in \{v_i \mid i \in \{3d+3 \mid d \in [(d \operatorname{div} 3)-1]_0\}\}.
		\end{cases}
	\end{align*}
	First, we observe that all $v_i$ are coloured differently than the adjacent vertices $u_i^1,u_i^2$ and $u_i^3$ for all $i \in [d]$. Further, the inner cycle $u_1^1\cdots u_d^1$ is coloured alternately in $b,r$ and $g$. The two outer vertices of every star graph are coloured identically as the inner cycle, resulting in an alternating colouring between $r$ and $g$.
	So, by the definition of $J_d$ this is a valid $3$-colouring. 
	
	Now, we need to find a marking sequence that verifies $J_d$'s $(d - (d \operatorname{mod} 3))$-locality. Consider $e = (b,r,g)$. In the first marking step, we have $(d - (d \operatorname{mod} 3))$ marked vertices since for every element in the set $[(d \operatorname{div} 3) -1]_0$ we have three vertices that are $b$-coloured. By now marking $r$ in the second marking step, we distinct three different cases:
	\begin{description}
		\item[Case 1 ($d \operatorname{mod} 3 =0$):] In this case, the inner circle can alternate every colour equally, so every set of vertices $\{u_i^1, v_i, u_i^2, u_i^3, u_{i+1}^1,  u_{i+1}^2, u_{i+1}^3\}$ for $i \in \{3d+1 \mid d \in [(d \operatorname{div} 3)-1]_0\}$ is a connected component. Every of them is separated via a $1$-vertex connected component $\{v_i\}$ for every $i \in \{3d+3 \mid d \in [(d \operatorname{div} 3)-1]_0\}\}$. Those are in total $2(d \operatorname{div} 3)$ connected components and by $(d - (d \operatorname{mod} 3)) > 2(d \operatorname{div} 3)$, the $(d - (d \operatorname{mod} 3))$-locality of $J_d$ follows.
		\item[Case 2 ($d \operatorname{mod} 3 =1$):] In this case the outer vertices of the star graph with central vertex $v_1$ are adjacent to the vertices $u_1^2, u_1^3,u_d^2, u_d^3$ that are all coloured in $r$ by $c$. Therefore, the marking of $b$ and $r$ results in the connected two components $\{u_1^1, u_2^1, u_d^1, v_1,u_1^2, u_1^3,u_2^2, u_2^3, u_d^2, u_d^3\}$ and $\{u_i^1, v_i, u_i^2, u_i^3, u_{i+1}^1,  u_{i+1}^2, u_{i+1}^3\}$ for $i \in \{3d+1 \mid d \in [(d \operatorname{div} 3)-1]\}$. Again, between every two of those connected components there lays an $r$-coloured vertex by construction which forms a connected component on its own. Thus, we result in $2(d \operatorname{div} 3)$ connected components in total. The result follows as in Case 1.
		\item[Case 3 ($d \operatorname{mod} 3 =2$):] In this case the marking of $b$ and $r$ results in the connected components $\{u_1^1, u_2^1, u_d^1, v_1,u_1^2, u_1^3,u_2^2, u_2^3, u_d^2, u_d^3, u_{d-1}^1, u_{d-1}^2,u_{d-1}^3\}$ and $\{u_i^1, v_i, u_i^2, u_i^3, u_{i+1}^1,  u_{i+1}^2, u_{i+1}^3\}$ for $i \in \{3d+1 \mid d \in [(d \operatorname{div} 3)-1]\}$. Again, between every two of those connected components there lays an $r$-coloured vertex by construction which forms a connected component on its own. Thus, we result in $2(d \operatorname{div} 3)$ connected components in total. The result follows as in Case 1.
	\end{description}
	See Figure~\ref{flowersnarks} for an exemplary colouring given with $c$ for $J_5$ and the connected components as in Case 3.
	The $(d - (d \operatorname{mod} 3))$-locality of $J_d$ follows.
\end{proof}

\fi

\begin{remark}
	Note that there also exists a colouring of $J_n$ that uses the most infrequent colour (here $b$) only three times. This colouring is given by
	\begin{align*}
		c(v) 
		= \begin{cases}
			b, & \text{ if } v \in \{u_1^1,u_1^2,u_1^3\},\\
			r, & \text{ if } v \in \{u_i^1, u_i^2, u_i^3 \mid i \in [n-1] \land i \text{ odd}\} \cup \{v_i \mid i \text{ even}\},\\
			b, & \text{ if } v \in \{u_i^1, u_i^2, u_i^3 \mid i \in [n] \land i \text{ odd}\} \cup \{v_1\} \cup \{v_i \mid i \text{ even}\}.
		\end{cases}
	\end{align*}
	Nevertheless, this colouring results in a higher locality since not all of the $r$- or $g$-coloured vertices are adjacent to the three $b$-coloured vertices. 
\end{remark}

\begin{remark}
The reader may verify that for the first and second Blanuša snark and the double-star snark there exist $3$-colourings such that they are $5$-, $6$- and $9$-local, resp. 
\end{remark}

\begin{table}
	\centering
	\caption{A summary of upper and lower bounds for the locality of graph classes.}
	\vspace{3mm}
	\begin{tabular}{l|c|c}
		Class & Lower Bound & Upper Bound \\
		\hline
		\hline
		Complete Graphs, Star Graphs& $1$ & $1$\\
		Wheel Graphs, Friendship Graphs & &  \\
		\hline
		Paths & $1$ & $\lfloor n/2 \rfloor$ \\
		Cycles, Web graphs &  & \\
		1/2-regular Graphs & & \\
		\hline
		Complete Bipartite Graphs $K_{n_1,n_2}$ & $1$ & $\min\{n_1,n_2\}$\\
		Crown Graphs, Hypercubes & & \\
		Bipartite Graphs with $2$-colouring & & \\
		Knight's graph, Gear graph & & \\
		\hline
		Sunflower graphs $S_d$, Helm graphs $H_d$ & 1 & $\lfloor \frac{d-1}{2} \rfloor$ \\
		\hline
		Peterson Graph & 1 & 3 \\
		\hline
		Edgeless Graphs, 0-regular Graphs & $n$ & $n$\\
	\end{tabular}
	\label{tablegraphs}
\end{table}

	\section{Practical Application}\label{pract}

In addition to theoretical properties, we are also interested in practical applications of $k$-locality in graphs.
One application could be to optimize utility of a location of a shop type (groceries, pharmacy, restaurant, etc.) in a road network by determining the largest number of involved features (types of buildings) in a $k$ block radius.
This could make customer trips more convenient, since they could visit more points of interest in a shorter trip.
Further, $k$-locality could be used as part of data exploration, by yielding valuable insights into the organising principles of the underlying graph.

In order to calculate the minimal $k$-locality of a given graph, the na\"ive approach would be to enumerate all valid colour permutations, and to compute the $k$-locality w.r.t. every marking sequence.
However, since this approach would not scale well for increasing numbers of colours or larger graphs, we propose a priority-based search (cf. Algorithm \ref{alg:priority_search}) to make the calculation feasible.
First, we initialise the priority queue with an empty marking prefix $\varepsilon$ and max$_k = 0$ (line 3).
The queue is maintained in ascending order of the $k$-locality of the respective marking prefixes.
Since the first queue element marks the prefix with the least number of components, we pop it as the most promising candidate.
For each remaining colour we expand the current sequence prefix by one colour, count the resulting components, and sort the expanded sequences back into the queue (lines 6-9).
Once a sequence contains all colours, it is complete and the minimal $k$-locality may be updated (lines 11-15).
Note that this algorithm finds \textit{all} sequences that have minimal $k$-locality.
If one is only interested in finding the minimal $k$-locality, it is sufficient to find only one of the sequences with minimal $k$-locality.
This can be achieved by requiring the first queue element to have strictly fewer connected components than the current best ('$<$' instead of '$\leq$' in line 4), and only keeping a single best sequence (removing lines 11 and 12).
We demonstrate experimentally that this yields even further improvements in runtime.

\begin{theorem}\label{priority_search_optimality}
    Algorithm \ref{alg:priority_search} is optimal in the number of successive expansions of a marking prefix beginning with an empty sequence and is guaranteed to find the correct solution.
\end{theorem}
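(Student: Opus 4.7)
The plan is to prove two properties of Algorithm~\ref{alg:priority_search} separately: correctness (it outputs exactly the marking sequences achieving $\loc(G,c)$) and optimality (among all algorithms whose only primitive is the expansion of a marking prefix, none performs fewer expansions). Both arguments rest on a \textbf{monotonicity lemma} which I would establish first: for any marking prefix $p$ and any one-colour extension $p' = p\cdot x$, we have $\loc(G,c,p')\geq\loc(G,c,p)$. This is immediate from \Cref{graphloc1}, since the locality of a prefix is the maximum of connected-component counts over its marking steps, and extending $p$ only appends new steps without altering earlier ones.

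For correctness, I would use the monotonicity lemma together with the priority-queue ordering to argue that prefixes are popped in non-decreasing order of locality, so every prefix whose locality is at most the eventual optimum $\loc(G,c)$ is popped before any prefix of strictly larger locality. Consider any optimal complete sequence $e^* = (x_1,\ldots,x_\ell)$ and its chain of prefixes $p_0 = \varepsilon,\, p_1,\, \ldots,\, p_\ell = e^*$. By monotonicity, $\loc(G,c,p_i)\leq\loc(G,c,e^*)=\loc(G,c)$ for every $i$, so an induction on $i$ shows that each $p_i$ is eventually popped and expanded, inserting $p_{i+1}$ into the queue. When $p_\ell$ is popped, it is recognised as complete and max$_k$ is updated. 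Since the condition on line 4 uses $\leq$, no optimal complete sequence is pruned away before the queue empties.

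For optimality, the key claim is that any correct algorithm that constructs sequences only via successive prefix expansions starting from $\varepsilon$ must expand every prefix $p$ satisfying $\loc(G,c,p)\leq\loc(G,c)$; otherwise, some optimal complete sequence extending $p$ would remain undiscovered and the algorithm could not certify its absence. Algorithm~\ref{alg:priority_search} pops and expands exactly those prefixes $p$ whose locality is at most max$_k$ at the moment of popping, and since max$_k$ is always a valid upper bound for $\loc(G,c)$ (being realised by some complete sequence already found, or initialised to a trivial upper bound), each expanded prefix is one that any correct algorithm in the model would also have to expand. Combining both directions yields the claimed optimality.

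The main obstacle is formalising the optimality statement, which requires fixing the computational model precisely: the lower bound only applies to algorithms whose sole primitive for interrogating the graph is to expand a prefix and observe the resulting connected-component count. Within this model, the lower bound reduces to the observation that a prefix's descendants cannot be ruled out as containing an optimal sequence without first examining those descendants, so no correct algorithm can skip such an expansion. The remainder of the proof is then bookkeeping to verify the priority-queue invariants and to confirm that the initial value of max$_k$ is safe (never less than $\loc(G,c)$).
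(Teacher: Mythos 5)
The paper does not actually include a proof of \Cref{priority_search_optimality} (the theorem is stated without an accompanying proof environment), so there is no argument of the authors' to compare yours against; I can only assess your proposal on its own merits. Your decomposition is sound and, I believe, the intended one: the monotonicity lemma $\loc(G,c,p\cdot x)\geq \loc(G,c,p)$ is exactly what the key $\max(\text{current}_k,\text{count})$ encodes, and it yields the Dijkstra-style invariant that popped keys are non-decreasing, from which one gets both that the first complete sequence popped realises $\loc(G,c)$ and that the loop condition never prunes a prefix of an optimal sequence. The adversary argument for the lower bound is also the right idea, and you are correct that it only makes sense after fixing the oracle model in which the sole primitive is ``expand a prefix and observe the component count''; the theorem statement is silent on this, so making the model explicit is a genuine improvement. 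Three points deserve attention. First, you repeatedly write $\max_k$ where you mean $\min_k$ (the global incumbent); as written, the sentence ``$\max_k$ is always a valid upper bound for $\loc(G,c)$'' refers to the wrong variable. Second, your correctness induction should note explicitly why the first \emph{complete} sequence popped is optimal (some prefix of an optimal sequence always sits in the queue with key at most $\loc(G,c)$, so nothing with a larger key can reach the front earlier); this is implicit but not stated. Third, and most substantively, your lower-bound claim that every correct algorithm must expand \emph{every} prefix $p$ with $\loc(G,c,p)\leq\loc(G,c)$ is true only because Algorithm~\ref{alg:priority_search} is required to output \emph{all} minimal sequences: for a prefix with $\loc(G,c,p)$ exactly equal to $\loc(G,c)$, a descendant could still be a minimal sequence, so it cannot be skipped. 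For the priority* variant that returns a single witness this step fails, so you should state that the optimality claim is specific to the ``find all'' semantics of the algorithm as given.
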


\begin{algorithm}[!h]
\scriptsize
\caption{Priority Search}
\label{alg:priority_search}
    \Input{Graph $G = (V, E)$, valid colouring $c : V \to [\ell]$}
    \Output{minimum $k$-locality of $G$, all marking sequences of that $k$-locality}
    \nonl\hrulefill
    
    min$_k \gets \infty$ \;
    minSequences $\gets \emptyset$ \;
    queue $\gets \left[ (0, \varepsilon) \right]$    \tcp*{\scriptsize $\left[ \, (\text{max}_k, \, \text{prefix}), \, \dots \right]$ sorted asc. by $\text{max}_k$}
    
    \While{queue $\neq \emptyset \, \land$ queue$\left[0\right]\left[0\right] \leq$ min$_k$}{
        (current$_k$, prefix) $\gets$ queue.pop() \;
        \If{prefix incomplete}{
                        \ForAll{remaining colours $c_j$ not in prefix}{
                count $\gets$ no. components in $G$ marked with prefix $+ \left[ c_j \right]$\;
                queue.insort($(\text{max}(\text{current}_k, \text{count}), \text{prefix} + \left[ c_j \right])$)
            }
        } \Else (\tcp*[f]{\scriptsize current sequence is complete}){
            \If{current$_k$ = min$_k$}{
                minSequences $\gets$ minSequences $\cup \, \{ \text{prefix} \}$ \;
            }
            \If{current$_k$ < min$_k$}{
                min$_k \gets$ current$_k$ \;
                minSequences $\gets \{ \text{prefix} \}$ \;
            }
        }
    }
    \Return min$_k$, minSequences \;
\end{algorithm}

In the following we further empirically demonstrate the efficiency of the algorithm for various graphs.
Many systems in nature and society, such as biological neural networks, bibliographical networks, or the internet, frequently exhibit a common organising principle (next to sheer size), which is referred to scale-free power-law distributed vertex connectivity \cite{barabasi1999emergence,albert2002complexNetworks}.
Barabási and Albert identified two main reasons that result in this phenomenon: (1) new vertices enter the network, causing it to expand over time, and (2) new vertices tend to connect to already well-connected vertices (a.k.a. preferential attachment or rich-get-richer) \cite{barabasi1999emergence,albert2002complexNetworks}.
Based on these principles they developed a random graph model, which creates scale-free power-law distributed graphs \cite{barabasi1999emergence,albert2002complexNetworks}.
Since the Barabási-Albert random graph represents a large class of real-world graphs, we use it as a proxy to benchmark the efficiency and scalability of our approach on graphs with good control over the number of vertices and colours, while still retaining typical, real-world properties (such as degree distribution, diameter, and clustering coefficient).
Due to space considerations, we neglect other random graph models, such as the Erd\H{o}s-Rényi model \cite{erdos1960evolution} and the Watts-Strogatz model \cite{watts1998collective}, since these generally feature less natural degree distributions, although they would lend themselves for considerations in future work for more extensive comparisons.
We vary the network size from 10k to 100k vertices, where each vertex added connects to 10 existing vertices.
The vertex colours are distributed uniformly at random.
We investigate the impact of graph size and number of colours separately to avoid excessive parameter combinations that yield little additional insight.
Therefore, when measuring the impact of the graph size alone, we keep the number of colours fixed at 5, and for impact of the number of colours fix the number of vertices at 5000.
Different combinations of these parameters yield a similar result, which we omit for the sake of brevity.
We report the average runtime with standard deviation over 5 runs of the na\"ive approach, the priority search (Algorithm \ref{alg:priority_search}), as well as a variant of Algorithm \ref{alg:priority_search}, named priority*, which does not yield \textit{all} marking sequences with minimal $k$-locality, but terminates once it has found at least one that is guaranteed to have minimal $k$-locality.
Figure~\ref{fig:sensitivity_n} shows that the priority-based search outperforms the na\"ive approach substantially and scales better to larger graphs.
Since counting the number of components takes longer for larger graphs, the priority-based searches benefit from requiring fewer counts.
This is drastically emphasised in Figure~\ref{fig:sensitivity_colour}, which shows that the na\"ive search does not scale well for increasing numbers of colours (note the log-scale).
Depending on the application scenario it may be preferable to chose priority* over the full variant, since it can shave off even more computation time in case only the minimum $k$-locality is needed, and outperforms the na\"ive approach by up to 4 orders of magnitude.
However, there may still be a benefit particularly for exploration purposes to have the ability to find all sequences with minimal $k$-locality.

\begin{figure}[!h]
    \centering
    \begin{subfigure}[b]{0.49\textwidth}
        \centering
        \includegraphics[width=\linewidth]{./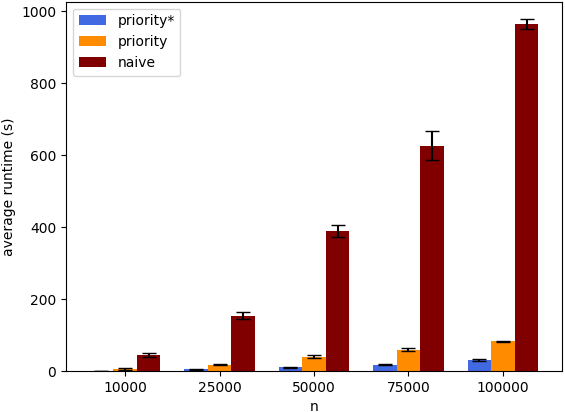}
        \caption{Sensitivity to $|V|$}
        \label{fig:sensitivity_n}
    \end{subfigure}
    \hfill
    \begin{subfigure}[b]{0.49\textwidth}
        \centering
        \includegraphics[width=\linewidth]{./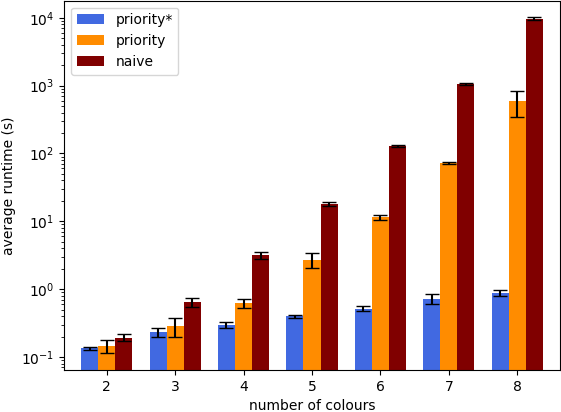}
        \caption{Sensitivity to number of colours.}
        \label{fig:sensitivity_colour}
    \end{subfigure}
    \caption{Efficiency and scalability on Barabási-Albert random graphs}
    \label{fig:efficiency} 
\end{figure}

One limitation of the previous experiments is the uniform distribution of colours in the graph.
In real-world, \textit{heterogeneous} graphs \cite{sun2011pathsim,HIN-survey}, where the colours could represent vertex types, the distribution would not be random, but rather organised by some underlying principle.
As a familiar example, we look at a commonly studied subset of the DBLP publication graph, that covers the 4 areas of \textit{databases}, \textit{data mining}, \textit{machine learning}, and \textit{information retrieval} \cite{sun2011pathsim,HIN-survey}.
The graph contains about 5.9k author vertices ($A$), 5.2k papers ($P$), 18 venues $V$, and 4.4k topics ($T$), which are linked following the network schema in Figure~\ref{fig:dblp} (left).
The average runtime of the search algorithms over 5 runs is shown in Figure~\ref{fig:dblp} (right), where the speed-up of the priority-based search again is substantial (almost 2 orders of magnitude), despite only considering 4 colours/types.
Since only two marking sequences with minimal $k$-locality exist, the difference between priority and priority* is negligible.

In addition to the efficiency, the computed result itself is of great interest.
The minimum $k$-locality of the DBLP subset is 18, with the two minimal sequences being $(V, P, A, T)$ and $(V, P, T, A)$.
This suggests that the 18 venues, despite being few in numbers, play a pivotal role in the network and act as hubs, which span large areas of the network through the connecting papers.
This matches the expectation one would have for such a graph, since the publishing activity revolves around the venues.
The next smallest sequences have $k$-localities larger than 4.4k, starting with $T$.
From this we can infer that no other vertex types act in a similarly central capacity as the venues, but instead are scattered across the graph.
This hierarchy of centrality is not dissimilar to the underlying principles of hierarchical clustering.
We believe that further study of the $k$-locality and the corresponding colour/type sequences can provide great insights for data exploration.
A similar evaluation of other areas of computer science like theoretical computer science is ajar but not of less interest.
The choice for our sample subset of the DBLP graph is based on the experience of some authors with the evaluation of this particular subgraph, and demonstrates the capability of $k$-locality to be a helpful tool in data exploration.

\begin{figure}[!h]
    \centering
    \begin{subfigure}[b]{0.49\textwidth}
        \centering
        \raisebox{.75cm}{
            \includegraphics[width=0.8\linewidth]{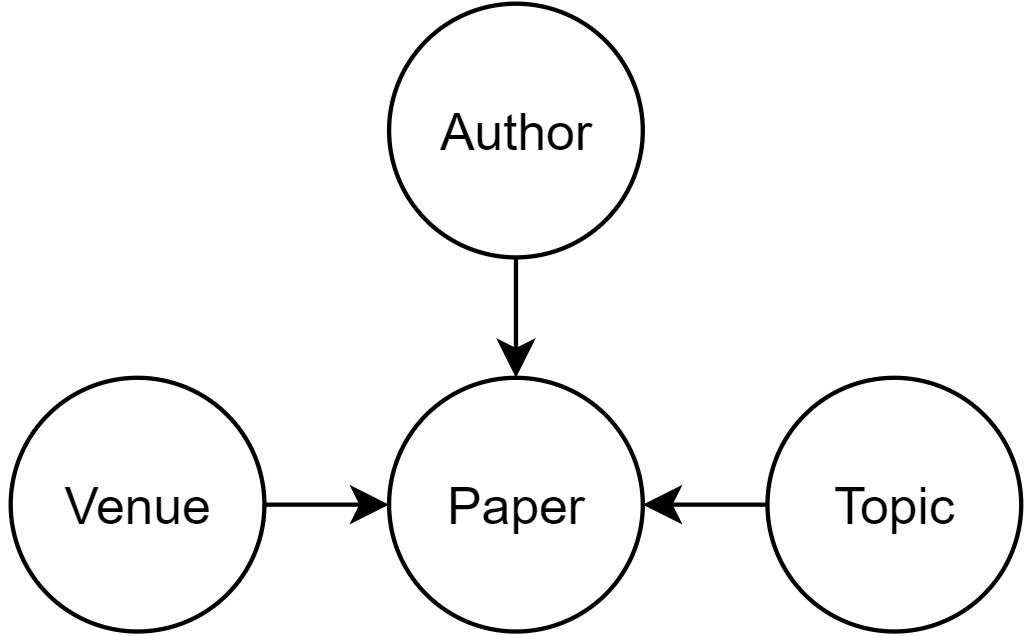}
        }
    \end{subfigure}
    \hfill
    \begin{subfigure}[b]{0.49\textwidth}
        \centering
        \includegraphics[width=\linewidth]{./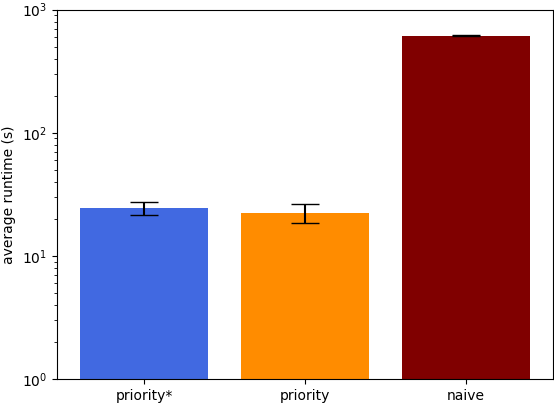}
    \end{subfigure}
    \caption{Schema of the DBLP graph (left) and runtime comparison (right).}
    \label{fig:dblp} 
\end{figure}

	\section{Conclusions}\label{conc}
	
In this work, we extended the notion of $k$-local words introduced by Day et al.~\cite{localpatterns} to graphs.
Not astonishingly the general problem of deciding whether a graph is $k$-local for some $k\in\N$ is $\NP$-complete since words can be interpreted as linear graphs.
Nevertheless, some graph classes yield a determinable locality based on a minimal colouring.
In this field, it remains open to determine the exact colouring since not all minimal colourings lead to the same locality.
As mentioned in Remark~\ref{everygraph1local} every connected graph is $1$-local colourable if the number of colours is not limited.
Fixing $\ell$ and characterising the graphs for which an $\ell$-local colouring exists, is a topic for further research. For several graph classes, we provided the minimal and maximal locality based on a minimal colouring of the graphs.
The theoretical results so far follow the notion of $k$-locality on words, i.e., find an enumeration such that the $k$-locality is minimal.
Regarding pattern mining, one could also alter the question in finding the maximum:
Find a route, more explicitly a sequence of supermarkets and pharmacies to be visited, which contains as many co-located supermarkets and pharmacies together in a specific range, i.e.,  maximise $k$ that denotes the number of co-occurring {\em blocks} of supermarkets and pharmacies.
The benefit that comes with this task, is that users have multiple options if one co-location of supermarkets and pharmacies is too crowded (long queues, many items out of stock, etc.) the closest options are provided.
Another application is that for a given $k$ determine the largest number of involved features $m$ (types of buildings) involved, i.e., get me for $k=2$ blocks the largest number of different shop types (groceries, pharmacy, fast-food restaurant etc.).
The purpose behind this application is to maximise the \textit{utility} of a specific location by having as many different shop types $d$ as the customer needs while at the same time having $k-1$ further locations to visit where the $d$ shop types co-occur.
This allows for minimising travel time by having multiple shop types at the customers' disposal per location.
In order to enable such tasks for the future, we have proposed a priority search algorithm, which is prefix expansion-optimal and able to calculate the minimum $k$-locality of a graph and its corresponding marking sequences several orders of magnitude faster than an exhaustive search on scale-free graphs.
Since scale-free graphs are ubiquitous \cite{barabasi1999emergence,albert2002complexNetworks}, we believe our solution bears great potential for future studies, not only in the theoretical field, but also in the practical.

\subsubsection*{Acknowledgements.} We would like to thank Max Bannach, Malte Skambath and Philipp Sieweck for helpful discussions and some main ideas included in this work.

	%
	%
	%
	 \bibliographystyle{splncs04}
	 \bibliography{refs}

%
 \section{Further Definitions}\label{furtherdefs}
 \subsection{$k$-Locality on Words}

Given a property $P:\Sigma\rightarrow\{0,1\}$, a factor $u$ is a {\em $P$-block}  of a word $w=xuy$ if $P(u[i])=1$ for all 
$i\in[|u|]$ and $P(x[|x|])=P(y[1])=0$ (if $x$ or $y$ are empty the constraint does not have to be fulfilled). 
For the property $P_\ta$ defined by $P_{\ta}(x)=1$ iff $x=\ta$ for $x\in\Sigma$, the
word $\ta\tb\ta\ta\ta\tb\ta\ta\tb\tb$ has 3 $P_{\ta}$-blocks (or short three $\ta$-blocks).

In the following, we give the main definitions on $k$-locality, following~\cite{localpatterns}. 

\begin{definition}\label{basedefs}
	Let $\overline{\Sigma}=\{\overline{x}\mid x\in \Sigma\}$ be the set of \emph{marked letters}. For a word $w\in\Sigma^{\ast}$, a {\em marking sequence} $e$ of the 
	letters occurring in $w$, is an enumeration $(x_1,x_2,\ldots, x_{|\letters(w)|})$ of $\letters(w)$. We say that $\ta_i\leq_{e}\ta_j$ if $\ta_i$ occurs before $\ta_j$ in $e$, for $i,j\in[|\letters(w)|]$. The enumeration obeying the total order of the alphabet is called the {\em canonical marking sequence} $e_{\Sigma}$. A letter $x_i$ is called {\em marked at stage $k\in\N$}  if $i\leq k$.
	Moreover, we define 
	$w_k$, {\em the marked version of $w$ at stage $k$}, as the word obtained from $w$ 
	by replacing all $x_i$ with $i\leq k$ by $\overline{x_i}$. A factor of $w_k$ is a \emph{marked block} if the defining property of the block is that it contains only elements from $\overline{\Sigma}$. 
	The locality of a word $w$ w.r.t. a marking 
	sequence $e$ ($\loc_{e}(w)$) is the maximal number of marked blocks 
	that occurred during the marking process.
\end{definition}

In the context of Definition \ref{basedefs}, $w_{|\letters(w)|}$ is always completely marked.
Using the idea of a marking sequence, we define the $k$-locality of a word.

\begin{definition}\label{def1}
	A word $w\in\Sigma^{\ast}$ is {\em $k$-local} for $k\in\N_0$ if 
	there exists a marking sequence 
	$(x_1,\ldots,x_{|\letters(w)|})$ of $\letters(w)$, such that, for all $i\leq |\letters(w)|$ we have that 
	$w_i$ at stage $i$, has at most $k$ marked blocks. A word is called {\em strictly} 
	$k$-local if it is $k$-local but not $(k-1)$-local. 
\end{definition}

When inspecting the respective sequences for marking words w.r.t. a given marking sequence the authors in \cite{fleischmann} introduced the notion of blocksequences.

\begin{definition}
	Let $w\in\Sigma^{\ast}$ and $e=(y_1,\dots,y_{\ell})$ be a marking sequence. The {\em blocksequence} $\beta_{e}(w)$
	is the sequence $(b_1,\dots,b_{\ell})$ over $\N$ such that in $e$'s \nth{$i$} 
	stage on marking $w$, $b_i$ blocks are marked, for all $i\in[\ell]$. 
\end{definition}

Due to the type of extension of a marked block the authors also introduced the notion of  neighbours, joins and singletons to characterise the relative position of the upcoming marked letter and the already marked blocks. We only recall the definition of neighbours here.

\begin{definition}\label{rules}
	Let $e=(y_1,\dots,y_{\ell})$ be a marking sequence of $w\in\Sigma^{\ast}$. 
	At stage $i\in[\ell]$, an occurrence of $y_i$ is said to be a {\em neighbour} if there exist $u_1\in\overline{\Sigma}^{+}$, 
	$u_2\in\Sigma^{+}$ and $v_1,v_2\in(\Sigma\cup\overline{\Sigma})^{\ast}$ with 
	$w_i=v_1u_1y_iu_2v_2$, $w_i=v_1u_2y_iu_1v_2$, $w_i=v_1u_1y_i$, or $w_i=y_iu_1v_1$.
	A marking sequence $e$ is called {\em neighbourless} for a word $w\in\Sigma^{\ast}$ if in any stage while marking $w$ with $e$ no neighbour occurrences exist. A word $w\in\Sigma^{\ast}$ is called {\em neighbourless} if there exists a neighbourless marking sequence $e$ for $w$.
\end{definition}

\begin{remark}\label{lookatcondensed}
	Notice that neighboured letters that are the same do not have any impact on a words locality since they are marked together. Formally, if $w = x_1^{k_1} x_2^{k_2} \cdots x_\ell^{k_\ell}\in \Sigma^*$ with $k_i, \ell \in \N, i\in [\ell]$ and $x_i \in \Sigma$, then the \emph{condensed form} of $w$ is defined by $\operatorname{cond}(w) = x_1 \cdots x_\ell$.
	We have that $w$ is strictly $k$-local iff $\cond(w)$ is strictly $k$-local since powers of letters are marked at once and for one block each, i.e., the same marking sequence applies for $w$ and $\cond(w)$.
\end{remark}

\subsection{Graph Theory}

\begin{definition}
	In a graph $G=(V,E)$ the \emph{contraction of edge} $e = \{v,v'\} \in E$ is the replacement of $v$ and $v'$ with a new vertex such that edges incident to this new vertex are the edges other than $e$ that were incident with $v$ or $v'$.
	A graph $H$ is called an \emph{induced minor} of a graph $G$ if it can be obtained from an induced subgraph of $G$ by contracting edges.
\end{definition}

\begin{definition}
	For $n \in \N_{\geq 3}$, a \emph{cycle graph} is a graph on $n$ vertices and $n$ edges forming a cycle of length $n$.
\end{definition}

\begin{definition}
	For $n \in \N$, the \emph{complete graph $K_n$} on $n$ vertices is defined by $K_n = (\{1,\ldots,n\}, \binom{\{1,\ldots,n\}}{2})$ and the edgeless graph on $n$ vertices by $I_n = (\{1,\ldots,n\}, \emptyset)$.
\end{definition}

\begin{definition}
	For $n \in \N$, the \emph{star graph} is defined by $(\{1,\dots,n\},\{\{1,i\}\mid i > 1\})$.
\end{definition}

\begin{definition}
	For $n \in \N$, the \emph{wheel graph} $W_n$ on $n$ vertices constructed by connecting a single vertex to every vertex in an $(n-1)$-cycle.
\end{definition}

\begin{definition}
	For $k \in \N$, the \emph{friendship graph} $F_k$ is constructed of $k$ copies of a $3$-cycle joined in a common vertex.
\end{definition}

\begin{definition}
	\emph{Web graphs} $W_{\ell,r}$, $\ell,r \in \N, \ell >2$ are graphs consisting of $r$ equally rotated copies of the cycle graph $C_n$ connected via the corresponding vertices.
\end{definition}

\begin{definition}
	\emph{Hypercubes} $Q_m$ are graphs formed by the vertices and edges from an $m$-dimensional hypercube.
\end{definition}

\begin{definition}
	For $d_1,d_2 \in \N$, the $d_1 \times d_2$ \emph{knight's graph} $KG$ is a graph on $d_1 d_2$ vertices in which each vertex represents a square in an $d_1 \times d_2$ chessboard, and each edge corresponds to a legal move by a knight.
\end{definition}

\begin{definition}
	\emph{Gear graphs} $G_d$ are wheel graphs $W_d$ with a vertex added between each pair of adjacent vertices in the outer cycle for $d \in \N$.
\end{definition}

\begin{definition}
	For $d \in \N$ , \emph{sunflower graphs} $S_d$ consist of a wheel graph $W_d$ such that for every two vertices of the outer cycle a vertex is added such that those three vertices form a $C_3$.
\end{definition}

\begin{definition}
	For $d \in \N$, the \emph{helm graphs} $H_d$ is a graph consisting of a wheel graph $W_d$ by adding a pendant edge at each node of the cycle of $W_d$.
\end{definition}

\begin{definition}
	For $d \in \N_{\geq 3}$, \emph{$d$-crown graphs} are graphs of the following form $(\{x_1, \cdots, x_d, y_1, \ldots, y_d\},\{(x_i,y_j) \mid i,j \in [d], i\neq j\})$.
\end{definition}

\begin{definition}
	\emph{Complete bipartite graphs} $G = (V_1 \dot{\cup} V_2,E)$ are graphs where for all $\{e_1,e_2\} \in E$ we have that $e_1 \in V_1$ and $e_2 \in V_2$.
\end{definition}

\begin{definition}
	For $d\in \N_{\geq 2}$, \emph{complete $n$-partite graph} $G = (V_1 \dot{\cup} V_2 \dot{\cup} \ldots \dot{\cup} V_d,E)$ are graphs where for all $i \in [d]$ we have that $v_1 \in V_i$ and $v_2 \in V_i$ then $\{v_1,v_2\} \notin E$.
\end{definition}

\begin{definition}
	A graph $G=(V,E)$ is \emph{$k$-regular} for $k\in \N$ if every vertex $v \in V$ has exactly $k$ outgoing edges.
\end{definition}

\begin{definition}
	\emph{Flower snarks} $J_d$ for $d \in 2\N +1$ are snarks that are non-planar and non-Hamiltonian \cite{Isaacs1975InfiniteFO,flowersnarks}. The vertex set of $J_d$ consists of $4d$ vertices $V_{J_d} = \{v_1, \ldots, v_d\} \cup \{u_1^1, u_1^2, u_1^3, \ldots , u_d^1, u_d^2, u_d^3\}$. The graph is comprised of a cycle $u_1^1 \cdots u_d^1$ of length $d$ and a cycle $u_1^2 \cdots u_d^2 u_1^3 \cdots u_d^3$ of length
	$2d$. Further, every vertex $v_i$ is adjacent to $u_i^1, u_i^2$ and $u_i^3$.
\end{definition}

\end{document}